\newtheorem{thm}{Theorem}
\newtheorem{lemma}[thm]{Lemma}
\newtheorem{cor}[thm]{Corollary}
\begin{document}

\title[Diophantine condition in the LIL for lacunary systems]{Diophantine conditions in the law of the iterated logarithm for lacunary systems}
\author{Christoph Aistleitner}
\author{Lorenz Fr\"uhwirth}
\author{Joscha Prochno}

\address{Graz University of Technology, Institute of Analysis and Number Theory, Steyrergasse 30, 8010 Graz, Austria}
\email{aistleitner@math.tugraz.at} 

\address{Graz University of Technology, Institute of Analysis and Number Theory, Kopernikusgasse 24, 8010 Graz, Austria}
\email{fruehwirth@math.tugraz.at}

\address{University of Passau, Faculty of Computer Science and Mathematics, Dr.-Hans-Kapfinger-Stra{\ss}e 30, 94032 Passau, Germany}
\email{joscha.prochno@uni-passau.de}

\subjclass[2020]{Primary 42A55, 60F15; Secondary 11D04, 11D45}
\keywords{Lacunary trigonometric sums, law of the iterated logarithm, Diophantine equations}

\begin{abstract} 
It is a classical observation that lacunary function systems exhibit many properties which are typical for systems of independent random variables. However, it had already been observed by Erd\H os and Fortet in the 1950s that probability theory's limit theorems may fail for lacunary sums $\sum f(n_k x)$ if the sequence $(n_k)_{k \geq 1}$ has a strong arithmetic ``structure''. The presence of such structure can be assessed in terms of the number of solutions $k,\ell$ of two-term linear Diophantine equations $a n_k - b n_\ell = c$. As the first author proved with Berkes in 2010, saving an (arbitrarily small) unbounded factor for the number of solutions of such equations compared to the trivial upper bound, rules out pathological situations as in the Erd\H os--Fortet example, and guarantees that $\sum f(n_k x)$ satisfies the central limit theorem (CLT) in a form which is in accordance with true independence. In contrast, as shown by the first author, for the law of the iterated logarithm (LIL) the Diophantine condition which suffices to ensure ``truly independent'' behavior requires saving this factor of logarithmic order. In the present paper we show that, rather surprisingly, saving such a logarithmic factor is actually the optimal condition in the LIL case. This result reveals the remarkable fact that the arithmetic condition required of $(n_k)_{k \geq 1}$ to ensure that $\sum f(n_k x)$ shows ``truly random'' behavior is a different one at the level of the CLT than it is at the level of the LIL: the LIL requires a stronger arithmetic condition than the CLT does.
\end{abstract}
	
\maketitle

% % % % % % % % % % % % % % % % % % % % % % % % % % %
% % % % % % % % % % % % % % % % % % % % % % % % % % %	
\section{Introduction and main result}
% % % % % % % % % % % % % % % % % % % % % % % % % % %
% % % % % % % % % % % % % % % % % % % % % % % % % % %

%\textcolor{red}{Comments: (i) $\mathcal O$-notation to be corrected to $o$-notation!} - Erledigt Ch.
\vskip 5mm

The classical Hartman--Wintner law of the iterated logarithm (LIL) was proved by Philip Hartman and Aurel Winter in 1941 \cite{HW1941} and quantifies the typical fluctuation of sums of independent and identically distributed (i.i.d.) random variables on the scale between the central limit theorem (CLT) and the law of large numbers (LLN). More precisely, the LIL states that for a sequence $X_1,X_2,\dots$ of i.i.d.\ random variables of zero mean and finite variance $\sigma^2\in(0,\infty)$,
\begin{equation} \label{hartman_wintner}
    \limsup_{N\to\infty} \frac{\left| \sum_{k=1}^N X_k \right|}{\sqrt{2N\log\log N}} = \sigma \quad\text{almost everywhere (a.e.)}.
\end{equation}
Today it is a well-known fact in analysis and probabilistic number theory that the asymptotic behavior of sums of i.i.d.\ random variables is echoed in many ways by lacunary trigonometric sums $\sum_{k=1}^N\cos(2\pi n_k x)$ under the so-called 
Hadamard gap condition 
\begin{equation}\label{had}
\frac{n_{k+1}}{n_k} \geq q > 1, \qquad k \in\mathbb N,
\end{equation}
for a sequence $(n_k)_{k\geq 1}$ of natural numbers; this must be seen in consideration of the fact that the random variables $X_k(x):=\cos(2\pi n_k x)$ on the probability space $[0,1]$ with Borel $\sigma$-algebra endowed with Lebesgue measure $\lambda$ are identically distributed and uncorrelated (if all $n_k$, $k\in\mathbb N$, are distinct), but \emph{not} stochastically independent. Here and in all that follows, the statements remain true if cosine is replaced by sine.\\

It was shown by Erd\H{o}s and G\'al \cite{EG1955} that under the gap condition \eqref{had},
  \begin{equation} \label{erd-gal}
    \limsup_{N\to\infty} \frac{\left| \sum_{k=1}^N \cos(2\pi n_kx) \right|}{\sqrt{2 N\log\log N}} = \frac{1}{\sqrt{2}} \quad \text{a.e.},
  \end{equation}
i.e., lacunary trigonometric sums satisfy a Hartman--Wintner LIL under the Hadamard gap condition. Note that the variance of $\cos(2\pi n_k\cdot)$ is $\frac{1}{2}$, so \eqref{erd-gal} is in perfect accordance with \eqref{hartman_wintner}. Regarding normal fluctuations, Salem and Zygmund proved in \cite{SalemZyg1947} and \cite{SalemZyg1950} that under the Hadamard gap condition \eqref{had}, for every $t\in\mathbb R$, lacunary trigonometric sums satisfy the CLT
  \[
    \lim_{N\to\infty} \lambda \left( \left\{ x\in[0,1]\,:\, \frac{\sum_{k=1}^N\cos(2\pi n_kx)}{\sqrt{N/2}} \leq t \right\} \right) = \frac{1}{\sqrt{2\pi}}\int_{-\infty}^t e^{-y^2/2}\,dy =: \Phi(t);
  \]
again this is in perfect accordance with the CLT for truly independent systems. The analogy between lacunary trigonometric sums and truly random systems goes much further, as the almost sure invariance principles of Berkes \cite{berkes} and Philipp and Stout \cite{phil_st} show. Concerning large deviation principles, it was shown only recently \cite{AGKPR2023} (and also \cite{FJP2022}) that while under the large gap condition $n_{k+1}/n_k \to \infty$ the behavior of lacunary trigonometric sums is in perfect accordance with the truly independent case, under the mere Hadamard gap condition \eqref{had} surprising phenomena occur which reflect the particular arithmetic structure of the sequence $(n_k)_{k \geq 1}$.  Accordingly, while the CLT and LIL (and other results in a regime close to normal deviations) hold for lacunary trigonometric sums in a universal form, the large deviation behavior of lacunary trigonometric sums is very sensitive to fine arithmetic properties of the sequence $(n_k)_{k\geq 1}$. This is a very interesting effect, which is currently not well understood. On which deviation scale do fine arithmetic phenomena start to play a crucial role for the probabilistic theory of lacunary trigonometric sums? A first step towards a resolution of this question has been taken very recently by the last author together with Strzelecka \cite{PS2023}, who studied moderate deviations principles (MDPs) for lacunary trigonometric sums; recall that MDPs cover the deviation range between the CLT and a large deviations principle.\\

The results discussed so far all concern the case of ``pure'' trigonometric sums $\sum \cos (2 \pi n_k x)$ or $\sum \sin (2 \pi n_k x)$. It turns out that for more general lacunary sums $\sum f(n_k x)$ with a 1-periodic function $f$ the heuristics that ``lacunary sums mimic the behavior of sums of independent random variables'' remains largely intact, but the situation becomes much more delicate. We assume that $(n_k)_{k \geq 1}$ satisfies the Hadamard gap condition, and that
$f:\mathbb{R} \to \mathbb{R}$ is a function satisfying
\begin{equation} \label{f_equ}
f(x+1) = f(x), \qquad \int_0^1 f(x) ~dx = 0, \qquad \textup{Var}_{[0,1]} f < \infty,
\end{equation}
where $\textup{Var}_{[0,1]}$ denotes the total variation of $f$ on the interval $[0,1]$ (note that bounded variation implies integrability). A crucial observation in this setup is the Erd\H os--Fortet example (see, e.g., \cite[p.646]{Kac49}) of the sequence $n_k = 2^k-1$, $k \in\mathbb N$, and the periodic function $f(x) = \cos(2 \pi x) + \cos(4 \pi x)$, for which the classical CLT for $\sum f(n_k x)$ fails to hold (and the distribution of the normalized sums instead converges to a ``variance mixture'' Gaussian distribution), and for which a ``non-standard'' LIL holds in the form
  \begin{equation} \label{erd-fort}
    \limsup_{N\to\infty} \frac{\left| \sum_{k=1}^N f(n_k x) \right|}{\sqrt{2 N\log\log N}} = \left| 2 \cos(\pi x) \right| \quad \text{a.e.}
  \end{equation}
with a (non-constant) function on the right-hand side. Thus, for $\sum f(n_k x)$ the LIL can fail to hold in its truly independent form, and instead as a general result we only have an upper-bound LIL
\begin{equation*}
    \limsup_{N\to\infty} \frac{\left| \sum_{k=1}^N f(n_k x) \right|}{\sqrt{2 N\log\log N}} \leq c_{f,q} \quad \text{a.e.},
\end{equation*}
with a constant $c_{f,q} \in (0, \infty) $ depending on $f$ and the growth factor $q$ of $(n_k)_{k \geq 1}$; see Takahashi \cite{taka} for this result, and Philipp \cite{philipp} for a generalization to the so-called Chung--Smirnov type LIL. The interaction of analytic, arithmetic and probabilistic effects that underpins this theory has led to a wealth of research, leading from famous classical papers such as those of Kac \cite{kac} and Gaposhkin \cite{G1966} to recent work such as that of Berkes, Philipp and Tichy \cite{bpt}, Bobkov and G\"otze \cite{bg}, Conze and Le Borgne \cite{clb}, and in particular Fukuyama \cite{fuku2,fuku1,fuku3,f4}.\\

An interesting observation is that in the general framework, the fine probabilistic behavior of lacunary sums $ \sum f(n_k x)$ is intimately related to the number of solutions of certain linear Diophantine equations, such as the two-variable equation 
\begin{equation} \label{dio_equ}
a n_k - b n_\ell = c.
\end{equation}
Here $a,b \in \mathbb{N}$ and $c \in \mathbb{Z}_{\geq 0 }$ are fixed, and one has to consider the number of solutions $(k,\ell)$ of the equation with the size of the indices $k,\ell$ being bounded above by some threshold value. If $N \in \mathbb{N}$, we shall write 
\begin{equation} \label{L_def}
L(N,a,b,c) := \# \left\{1 \leq k,\ell \leq N:~a n_k - b n_\ell = c \right\}.
\end{equation}
We restrict ourselves to non-negative integers $c$, since we can always switch to this case by exchanging the roles of the parameters $k$ and $\ell$ and that of $a$ and $b$, respectively.
Note that trivially $L(N,a,b,c) \leq N$ for any $a,b,c$ with $(a,b,c) \neq (0,0,0)$ and any $N\in\mathbb N$, as long as $(n_k)_{k \geq 1}$ is a sequence of distinct integers. In \cite{AB2010} it was proved that $\sum f(n_k x)$ satisfies the CLT under the assumption that the number of solutions to Diophantine equations of the form \eqref{dio_equ} is asymptotically less than the trivial estimate. More precisely, it was shown in \cite[Theorem 1.1]{AB2010} that for any function as in \eqref{f_equ} and any lacunary sequence $(n_k)_{k \geq 1}$, and any $t \in \mathbb{R}$,
$$
\lambda \left( \left\{ x \in [0,1]\,:\,\sigma_N^{-1} \sum_{k=1}^N f(n_k x) \leq t \right\} \right) \to \Phi(t) \qquad \text{as $N \to \infty$}, 
$$
with 
$$
\sigma_N^2 := \int_0^1 \left( \sum_{k=1}^N f(n_k x) \right)^2 \,dx,
$$
provided that the following two conditions are satisfied:
\begin{enumerate}[label=(\roman*)]
\item The limiting variance is not degenerate, i.e., $\sigma_N^2 \geq C N$ for some suitable constant $C > 0$.
\item For all positive integers $a,b$ with $a \neq b$, the number of solutions to the Diophantine equation satisfies\footnote{The case $a=b$ is not relevant for our paper, since when $a=b$ and $(n_k)_{k \geq 1}$ is lacunary then by Lemma \ref{zyg_lemma} below we always have $L(N,a,a,c)=\mathcal{O}(1)$ uniformly in $c$ for $c \neq 0$, and consequently the number of solutions is small enough to be negligible. For $a=b$ and $c=0$ we trivially always have $L(N,a,a,0) =N$ (these solutions are the ``diagonal terms'' and cannot be avoided).} 
\begin{equation*} 
L(N,a,b,c) = o(N) \qquad \text{uniformly in $c \in \mathbb{Z} \backslash \{0\}$}. 
\end{equation*}
\end{enumerate}

Let us remark that the condition (i) on the non-degeneracy of the variance already appeared in the work of Gaposhkin \cite{G1966} and is indeed necessary, as shown by examples leading to telescoping sums such as $f(x) = \cos(2 \pi x) - \cos(4 \pi x)$ and $n_k = 2^k,~k\geq 1$. As proved in \cite{AB2010}, the Diophantine condition $L(N,a,b,c) = o(N)$ is optimal and cannot be replaced by $L(N,a,b,c) \leq \varepsilon N$ for a fixed $\varepsilon>0$. If additionally to (ii) the sequence $(n_k)_{k \geq 1}$ satisfies $L(N,a,b,0)=o(N)$ for all $a \neq b$, that is, if the number of solutions of \eqref{dio_equ} with $c=0$ on the right-hand side is also small, then (i) is not necessary since in that case one has $\sigma_N = \|f\|_2 \sqrt{N}$ as $N \to \infty$, and the CLT holds with exactly the same normalizing factor as in the truly independent case (see \cite[Theorem 1.2.]{AB2010}). Note that this discussion also explains why the CLT fails to hold for the Erd\H os--Fortet example: for the sequence $n_k =2^k-1,~k \geq 1,$ there are too many solutions to the equation
$$
n_k - 2 n_\ell = 1,
$$
namely all $N-1$ pairs $(k,\ell)$ of the form $k= \ell+ 1$ (cf.\ Equation \eqref{erd-fort-2} below, which explains how the function $2 \cos (\pi x)$ on the right-hand side of \eqref{erd-fort} arises).
\\

As explained in the previous paragraph, the results in \cite{AB2010} provide optimal Diophantine conditions guaranteeing the CLT for $\sum f(n_k x)$. Thus, the relation between sums of dilated functions and arithmetic information in form of the number of solutions of Diophantine equations is completely understood at the level of the CLT. In contrast, the situation in the case of the LIL is much less satisfactory. It was proved in \cite[Theorem 1.3]{A2010} that for $f$ as in \eqref{f_equ} and $(n_k)_{k \geq 1}$ as in \eqref{had}, 
\begin{equation} \label{LIL_rand}
\limsup_{N \to \infty} \frac{\left| \sum_{k=1}^N f(n_k x) \right|}{\sqrt{2 N \log \log N}} = \|f\|_2 \qquad \text{a.e.},
\end{equation}
provided that for all fixed positive integers $a,b$ with $a \neq b$,
\begin{equation} \label{dio_LIL}
L(N,a,b,c) = \mathcal{O} \left( \frac{N}{(\log N)^{1+ \varepsilon}} \right), \qquad \text{uniformly in $c \in \mathbb{Z}_{\geq 0}$},
\end{equation}
for some constant $\varepsilon>0$. Equation \eqref{LIL_rand} is in perfect accordance with truly independent behavior. However, unlike in the CLT case, it was unclear whether the Diophantine condition \eqref{dio_LIL} for the LIL case was optimal. There were good reasons to believe that the factor $(\log N)^{1 + \varepsilon}$ in the stronger Diophantine condition \eqref{dio_LIL} is an artifact coming from the particular proof strategy in \cite{A2010}, which as a key ingredient evokes a classical almost sure invariance principle (ASIP) of Strassen \cite{strassen}. Roughly speaking, Strassen's ASIP for martingale differences requires the almost sure convergence of conditional second moments, which can essentially be established from \eqref{dio_LIL} using Chebyshev's inequality. Such an argument seems rather wasteful, and hence some effort was put into trying to relax the Diophantine condition for the LIL down to the one which is known to be sufficient in the CLT case. However, in the present paper we prove the rather surprising result that the Diophantine condition \eqref{dio_LIL} is actually optimal (up to lower-order terms) to ensure the LIL for $\sum f(n_k x)$, even when $f$ is restricted to be a trigonometric polynomial. Our main result is the following.

\begin{thm} \label{th1}
Let $\varepsilon \in (0,1)$. Then, for every constant $K\in(0,\infty)$, there exist a trigonometric polynomial $f$ with mean zero and a lacunary sequence $(n_k)_{k \geq 1}$ such that for all $a,b \in \mathbb{N}$ with $a \neq b$, we have
\begin{equation} \label{dio_LIL_2}
L(N,a,b,c) = \mathcal{O} \left( \frac{N}{(\log N)^{1 - \varepsilon}} \right), \qquad \text{uniformly in $c \in \mathbb{Z}_{\geq 0 }$},
\end{equation}
and such that
$$
\limsup_{N \to \infty} \frac{\left| \sum_{k=1}^N f(n_k x) \right|}{\sqrt{2 N \log \log N}} \geq K \|f\|_2 \qquad \text{a.e.}
$$
\end{thm}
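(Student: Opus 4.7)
The plan is to build, for each given $K$ and $\varepsilon$, an explicit trigonometric polynomial $f$ and lacunary sequence $(n_k)_{k\ge1}$ with Diophantine counts $O(N/(\log N)^{1-\varepsilon})$ uniformly in $c$, yet with an inflated LIL constant $K\|f\|_2$. I would choose a large integer $J = J(K, \varepsilon)$, set
\[
 f(x) = \sum_{j=0}^{J-1} \cos(2\pi \, 2^j x), \qquad \|f\|_2 = \sqrt{J/2},
\]
and construct $(n_k)_{k \ge 1}$ by concatenating two kinds of segments: a super-lacunary backbone, with $n_{k+1}/n_k$ growing arbitrarily fast, and sparse Erd\H os--Fortet-type blocks $B_i = [P_i+1, P_i+b_i]$, $i = 1, 2, \ldots$, placed just before milestones $N_i$. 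Inside $B_i$ I impose the relation $n_{k+1} = 2 n_k + 1$ for $b_i - 1$ consecutive steps, so that the block is a scaled copy of $(2^k - 1)_k$. Block lengths are tuned so that $\sum_{i' \le i} b_{i'} \asymp N_i/(\log N_i)^{1-\varepsilon}$, saturating the Diophantine budget.

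\textbf{Verification of the Diophantine bound.} Within a block, $a n_k - b n_\ell = c$ has many solutions only when $a/b$ is a power of $2$ and $c$ is a small constant arising from iterations of $2 n_k - n_{k+1} = -1$; summing over blocks yields $L(N, a, b, c) \lesssim \sum_i b_i \lesssim N/(\log N)^{1-\varepsilon}$. For all other $(a, b, c)$, the super-lacunary backbone forces $O(1)$ solutions by an elementary gcd argument together with Lemma \ref{zyg_lemma}.

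\textbf{Amplification and LIL lower bound.} Inside block $B_i$ (starting with value $n_{P_i + 1} =: m_i$), the Kac--Erd\H os--Fortet decomposition gives
\[
 \sum_{k \in B_i} f(n_k x) \approx A(x) \, C_{b_i}^{(i)}(x) + B(x) \, D_{b_i}^{(i)}(x),
\]
where $C_{b_i}^{(i)}, D_{b_i}^{(i)}$ are lacunary cosine and sine sums of length $\approx b_i$ in the scale $m_i$, and $A(x) + iB(x) = \sum_{j=0}^{J-1} e^{2 \pi i (2^j - 1) x}$. The factor $\sqrt{A^2 + B^2}$ encodes the EF amplification, which can reach up to $J$ near special $x$. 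I choose the milestones $(N_i)$ sparsely so that the block contributions are asymptotically independent across $i$ (different multiplicative frequency scales provided by the super-lacunary backbone), and then apply a Borel--Cantelli argument: since each block contribution is a lacunary trigonometric sum with Gaussian-like tails, the probability $p_i := \lambda(\{x: |S_{N_i}(x)| \ge K \|f\|_2 \sqrt{2 N_i \log\log N_i}\})$ is of order $\exp(-K^2 (\log N_i)^{1-\varepsilon} \log\log N_i / J)$. For $J$ of order $K^2$ and milestones chosen so that $\log N_i \lesssim i^{1/(1-\varepsilon)}$, the sum $\sum_i p_i$ diverges, and Borel--Cantelli (using the asymptotic independence of block events) yields the a.e. lower bound $\limsup_N |S_N(x)|/\sqrt{2 N \log\log N} \ge K \|f\|_2$.

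\textbf{Main obstacle.} The crux is simultaneously achieving the three competing requirements: the per-block amplification $\sqrt{J} \asymp K$; the Diophantine budget $\sum_i b_i \le N_i/(\log N_i)^{1-\varepsilon}$; and the Borel--Cantelli divergence $\sum_i p_i = \infty$. This tripartite calibration is what selects the critical exponent $1-\varepsilon$ in the Diophantine condition: for the exponent $1+\varepsilon$ as in \eqref{dio_LIL} of \cite{A2010}, the analogous sum of tail probabilities converges, the construction breaks down, and the truly-independent LIL \eqref{LIL_rand} is recovered. Verifying the asymptotic independence of different block contributions, needed to apply Borel--Cantelli, requires showing the block-wise frequency sets remain Hadamard-lacunarily separated, which is guaranteed by the super-lacunary backbone inserted between consecutive blocks.
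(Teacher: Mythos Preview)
Your construction has a genuine gap that makes the Borel--Cantelli step fail. By placing Erd\H os--Fortet blocks with the single recursion $n_{k+1}=2n_k+1$, every block contributes solutions to \emph{the same} Diophantine equation $n_{\ell}-2n_k=1$. The bound $L(N,1,2,1)=\mathcal{O}(N/(\log N)^{1-\varepsilon})$ then forces $\sum_{i'\le i} b_{i'}\lesssim N_i/(\log N_i)^{1-\varepsilon}$, so the blocks together occupy only an $o(1)$ fraction of the indices. Consequently, even with maximal amplification (factor $J$ on the good set), the block sum has variance at most $J^2 b_i/2 = o(N_i)$, whereas the super-lacunary backbone of length $\approx N_i$ already has variance $\|f\|_2^2 N_i$. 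Your own tail estimate reflects this:
\[
p_i \asymp \exp\!\left(-\frac{K^2(\log N_i)^{1-\varepsilon}\log\log N_i}{J}\right),
\]
and with $\log N_i\asymp i^{1/(1-\varepsilon)}$ this becomes $p_i\asymp\exp(-c\,i\log i)$ for some $c>0$ depending only on $K,J,\varepsilon$. This series converges for every fixed $J$, so the divergence claim is false and Borel--Cantelli cannot be applied. No choice of milestones rescues this: the exponent $(\log N_i)^{1-\varepsilon}\log\log N_i/J$ tends to infinity for any fixed $J$, so $p_i\to 0$ faster than any power of $1/i$.

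The paper circumvents exactly this obstruction by a different allocation of the Diophantine budget. Instead of one equation with $\approx N/(\log N)^{1-\varepsilon}$ solutions, it creates $\approx(\log N)^{1-\varepsilon}$ \emph{different} equations (via sub-blocks $\Delta_i^{(m)}$ with additive shifts $m$, so that the relevant $c$-values $2^{2^{i^4}}m$ are all distinct), each having $\approx N/(\log N)^{1-\varepsilon}$ solutions. The amplified block $\Delta_i$ is then of size $\approx N$, not $o(N)$, and on the set where all $M(i)\approx(\log N)^{1-\varepsilon}$ slow factors simultaneously align (which has measure $\approx(\log N)^{-1+\varepsilon}$) the block sum has variance $\approx d^2 N/2$. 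The Gaussian tail at level $c\,d\sqrt{\varepsilon}\,\sqrt{N\log\log N}$ now decays only like a negative power of $\log N$, and the product with the good-set measure gives $p_i\gtrsim i^{-1+\varepsilon/2}$, which diverges. The missing idea in your proposal is precisely this splitting of the EF relations among many distinct values of $c$, which lets the amplified block have full size while still respecting the uniform-in-$c$ Diophantine bound.
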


Theorem \ref{th1} is remarkable as it shows that to guarantee the validity of a probabilistic limit theorem for lacunary sums of dilated functions on the LIL scale, one needs stronger arithmetic assumptions than one does on the CLT scale. We consider this to be a very interesting phenomenon. The necessary savings factor of order roughly $\log N$ in the Diophantine condition for the LIL seems to arise essentially as $e^{(\sqrt{2 \log \log N})^2 / 2}$ from the order of the tail of the normal distribution, and thus be directly connected with the fact that the LIL is concerned with deviations exceeding the CLT normalization $\sqrt{N}$ by an additional factor $\sqrt{2 \log \log N}$. One cannot help but wonder if a similar direct connection between the necessary arithmetic (Diophantine) condition and the size of the deviation that one is interested in  persists throughout other scales; note that such a direct link would have to become meaningless at least for additional factors of order exceeding $\sqrt{2 \log N}$, which would correspond to the requirement of saving a factor of more than $e^{(\sqrt{2 \log N})^2/2} = N$ in the Diophantine condition, thus asking for less than one solution in \eqref{L_def}, which is absurd.\footnote{Possibly it is no coincidence that martingale methods based on conditional second moments also seem to reach a critical point at deviations of order $\sqrt{2 \log N}$, see for example \cite{grama}.} 
If so, then the arithmetic theory underpinning the behavior of lacunary sums at small-scale deviations near $\sqrt{N}$ would be substantially different from the corresponding theory for deviations at large scales beyond $\sqrt{2 N \log N}$. One possible explanation for such a dichotomy could be that two-term Diophantine equations can only control the distribution of normalized lacunary sums at small deviation scales, and that at larger scales a different effect sets in which is only expressible in terms of Diophantine equations in more than 2 variables. We leave these questions for future research.

\vskip 5mm
% % % % % % % % % % % % % % % % % % % % % % % % % % % % % % % % % % % %
% % % % % % % % % % % % % % % % % % % % % % % % % % % % % % % % % % % %
\section{Construction of the sequence \& Solutions to Diophantine equations} \label{sec_construction}
% % % % % % % % % % % % % % % % % % % % % % % % % % % % % % % % % % % %
% % % % % % % % % % % % % % % % % % % % % % % % % % % % % % % % % % % %

We now present a completely explicit construction of a Hadamard gap sequence $(n_k)_{k \geq 1}$ satisfying the claim of Theorem \ref{th1}, and we show that it indeed satisfies the bound \eqref{dio_LIL_2} for the number of solutions of the two-variable linear Diophantine equations in \eqref{dio_equ}.\\

\underline{Step 1.} Let $\varepsilon \in (0,1)$ and $K \in (0,\infty)$ be given. Choose $d \in \mathbb{N}$ such that
\begin {equation} \label{size_d}
\frac{d \sqrt{\varepsilon}}{4}-2 > \frac{K \sqrt{d}}{\sqrt{2}};
\end{equation}
note that this condition can be satisfied by choosing $d$ sufficiently large (with the necessary size of $d$ depending on the parameters $\varepsilon$ and $K$). The number $d$ will later be the degree of the trigonometric polynomial $f$, which we construct in order to prove Theorem \ref{th1}. Further, let $R := R(\varepsilon) \in \mathbb{N}$ such that
\begin{equation} \label{size_R}
R > \frac{8}{\varepsilon}.
\end{equation}
The parameter $R$ will serve as decomposition parameter; we split the set of positive integers $\mathbb{N}$ into consecutive blocks $\Delta_1, \Delta_2, \dots$ such that $\# \Delta_i = R^i$, i.e., the sizes of the block are rapidly increasing. More precisely, we define
\begin{equation} \label{delta_def}
\Delta_i := \left\{\frac{R^i - R}{R-1} +1, \dots, \frac{R^{i+1}-R}{R-1} \right\}, \qquad i \in\mathbb N.
\end{equation}
From this construction it follows that for each $i \in\mathbb N$, 
\begin{equation*} 
\sum_{h=1}^{i-1} \# \Delta_h \leq \frac{1}{R-1} \# \Delta_i,
\end{equation*} 
i.e., the block $\Delta_i$ is even much larger than the collection of all previous $i-1$ blocks taken together.  
To put it more illustratively, the partial sum $\sum_{k \in \Delta_1 \cup \dots  \cup \Delta_i} f(n_k x)$ will be dominated by the terms with $k \in \Delta_i$, while the terms with $k \in \Delta_1 \cup \dots \cup \Delta_{i-1}$ will be essentially negligible, so that the lower bound in Theorem \ref{th1} only has to be established for sums $\sum_{k \in \Delta_i} f(n_k x)$ as $i \to \infty$. \\

\underline{Step 2.} We shall now split up each block $\Delta_i$, $i \in\mathbb N$, into disjoint subsets, where the number of subsets depends on the parameter $i$. More precisely, we decompose $\Delta_i$ into\footnote{Throughout the paper $\lceil x \rceil$ denotes the smallest integer which is at least as large as $x$.}
$$
\Delta_i^{(m)}, \qquad 1 \leq m \leq M(i) :=\lceil i^{1-\varepsilon} \rceil, 
$$
such that $\Delta_i^{(1)} < \Delta_i^{(2)}< \dots < \Delta_i^{(M(i))}$ holds element-wise with $\Delta_i = \cup_{m=1}^{M(i)} \Delta_i^{(m)}$, and such that all sets $\Delta_i^{(m)},~1 \leq m \leq M(i),$ have essentially the same cardinality, i.e.,
\begin{equation} \label{mi_card}
\left| \# \Delta_i^{(m)} - \frac{R^i}{\lceil i^{1-\varepsilon} \rceil} \right| \leq 1, \qquad 1 \leq m \leq M(i).
\end{equation}
Heuristically, this construction is made in such a way that if we write $\Delta_1 \cup \dots \cup \Delta_i = \{1, \dots, N\}$ for some suitable $N \in \mathbb{N}$, then 
\begin{equation} \label{delta_i_m_heuristic}
\# \Delta_i^{(m)} = \mathcal{O} \left( \frac{N}{(\log N)^{1-\varepsilon}} \right) \qquad \text{for all $m \in \{1, \dots, M(i)\}$},
\end{equation}
which reflects our bound \eqref{dio_LIL_2} on the number of solutions of Diophantine equations; note that the implied constant in the $\mathcal{O}$-term depends on the parameter $R$.\\

\begin{figure}[h!]
\centering
 \includegraphics[trim=80 670 80 95, scale=1]{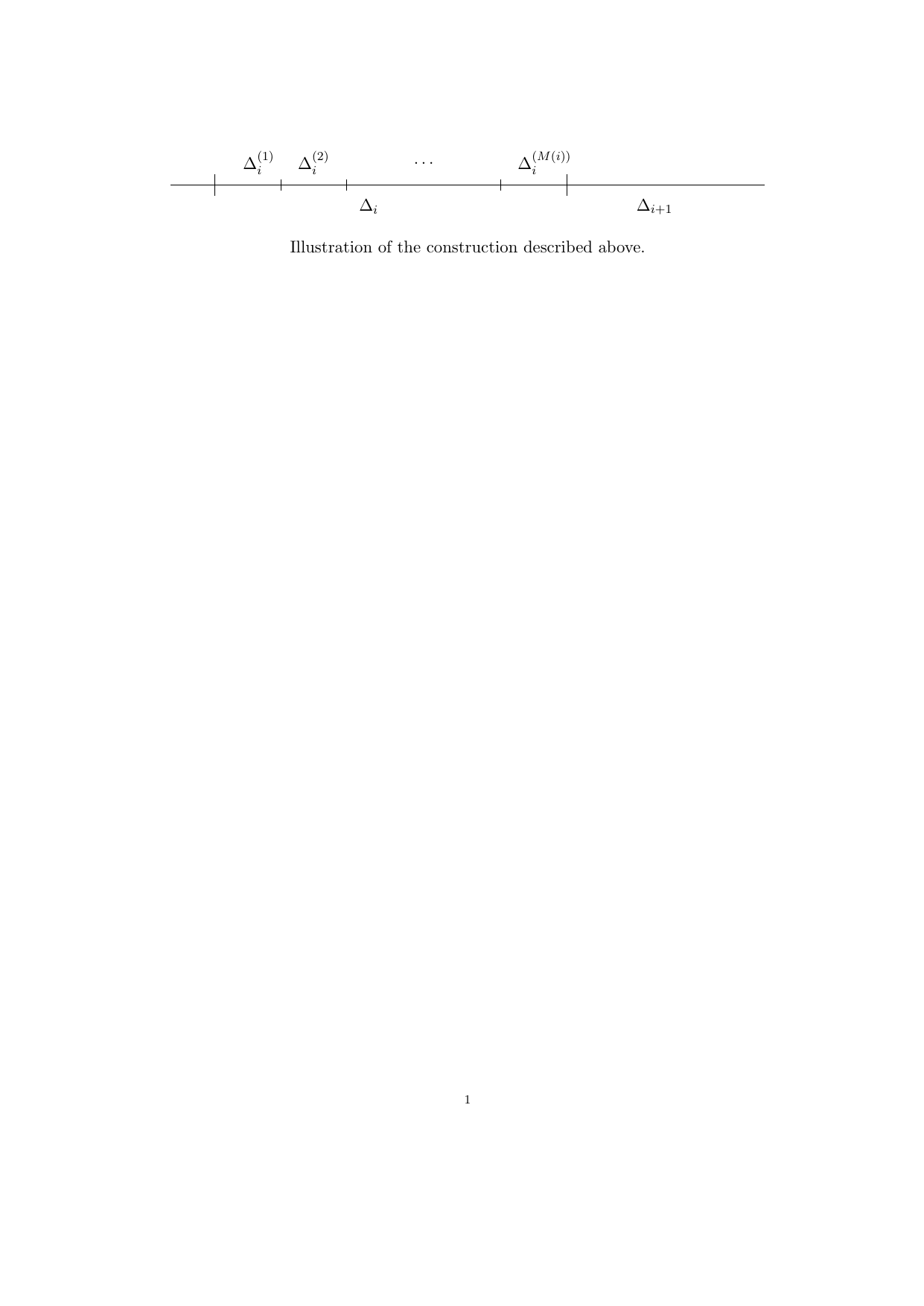}
\end{figure}

\underline{Step 3.} We shall now construct $(n_k)_{k\geq 1}$. For given $i \in\mathbb N$, $m \in \left \{ 1, \ldots , M(i) \right\}$ and for $k \in \Delta_i^{(m)}$, we define
\begin{equation} \label{nk_def}
n_k := 2^{2^{i^4}} \left( 2^k + m \right),
\end{equation}
where the first factor is to be understood as $2^{(2^{(i^4)})}$.

The heuristic behind this construction is the following. First, the elements of our sequence with indices in different blocks $\Delta_{i_1}$ and $\Delta_{i_2}$ are of very different size (because of the dominating prefactor), so that $f(n_k x)$ and $f(n_\ell x)$ for $k \in \Delta_{i_1},~\ell \in \Delta_{i_2},i_1 \neq i_2,$ are ``essentially independent''. Such pairs $k$ and $\ell$ also do not play a relevant role for counting the number of solutions of the Diophantine equations, see Lemma  \ref{lemma_dio} $(ii)$ below. Similarly, pairs $k$ and $\ell$ which are contained in the same block $\Delta_i$, but in different sub-blocks $\Delta_i^{(m_1)}$ resp.\ $\Delta_i^{(m_2)}$, will not play a significant role for counting the number of solutions of the Diophantine equations either, see Lemma \ref{lemma_dio} $(iii)$. What will contribute significantly are only pairs $k,\ell$ from the same sub-block $\Delta_i^{(m)}$, where there will be many solutions of equations such as 
$$
2 n_k - n_\ell = 2^{2^{i^4}} m,
$$
namely whenever $\ell = k+1$ (note that the number $m$ on the right-hand side of the Diophantine equation above is the same as in the superscript of the sub-block $\Delta_i^{(m)}$; different sub-blocks correspond to different Diophantine equations that have ``many'' solutions). In the Erd\H os--Fortet example there are no different blocks whatsoever, so that there are many solutions of the particular equation $2 n_k - n_{k+1} = 1$, which leads to $L(N,2,1,1)$ being as large as $\approx N$. We need $L(N,a,b,c)$ to be smaller in order to satisfy \eqref{dio_LIL_2}, and by grouping $k,\ell$ into approximately $(\log N)^{1-\varepsilon}$ many different blocks, instead of one equation with $\approx N$ solutions, we obtain $(\log N)^{1-\varepsilon}$ different equations with approximately $\frac{N}{(\log N)^{1-\varepsilon}}$ many solutions each, which is in accordance with \eqref{delta_i_m_heuristic}. This explains why the sequence constructed in such a way will satisfy the Diophantine condition \eqref{dio_LIL_2}. It is a different story (and will be shown in Section \ref{sec_LIL}) that the sequence constructed in this very particular way indeed leads to a large value on the right-hand side of the LIL, as claimed by Theorem \ref{th1} (and the presence of the factor $2^{2^{i^4}}$ in the definition of $n_k$ will also only become clear later). We will explain the heuristics behind this part of Theorem \ref{th1} later on, after defining the trigonometric polynomial $f$.\\

Note: Throughout the rest of this section, implied constants are allowed to depend on $\varepsilon$ and $K$ (and consequently also on $d$ and $R$), as well as on $a$ and $b$, but not allowed to depend on anything else. In particular, all implied constants are independent of $c$, $i$, $m$ and $N$. \\

The next lemma provides estimates on the number of solutions of Diophantine equations arising in our setup. As we shall see shortly, from this we can deduce that the sequence $(n_k)_{k\geq 1}$ constructed above indeed satisfies the Diophantine condition \eqref{dio_LIL_2} of Theorem \ref{th1}.  

\begin{lemma} \label{lemma_dio}
For the sequence $(n_k)_{k \geq 1}$ constructed in the paragraph above, we have the following estimates.
\begin{enumerate}[label=(\roman*)]
\item For all $a,b \in \mathbb{N}$ with $a\neq b$, and with $\frac{a}{b} \neq 2^r$ for all $r \in \mathbb{Z}$, we have
$$
\# \{k,\ell \geq 1:~a n_k - b n_\ell = c\} = \mathcal{O}(1),
$$
uniformly in $c \in \mathbb{Z}_{\geq 0}$. 
\item For all $a,b \in \mathbb{N}$ such that $\frac{a}{b}= 2^r$ for some $r \in \mathbb{Z} \backslash \{0\}$, we have
$$
\underbrace{\sum_{i_1=1}^\infty \sum_{i_2=1}^\infty}_{i_1 \neq i_2} \# \left\{k\in \Delta_{i_1},~\ell \in \Delta_{i_2}:~a n_k - b n_\ell = c \right\} = \mathcal{O} (1),
$$
uniformly in $c \in \mathbb{Z}_{\geq 0}$. 
\item For all $a,b \in \mathbb{N}$ such that $\frac{a}{b}= 2^r$ for some $r \in \mathbb{Z} \backslash \{0\}$, and for all $i \in\mathbb N$:
\begin{enumerate}
\item If $c = 2^{2^{i^4}} b m (2^r-1)$ for some $m \in \{1, \dots, M(i)\}$, then
$$
\# \{ k,\ell \in \Delta_i:~ a n_k - b n_\ell = c\} = \frac{R^i}{\lceil i^{1-\varepsilon} \rceil} - r +  \mathcal{O} (i^2).
$$
\item If $c \geq 0$ is not of the form $2^{2^{i^4}} b m(2^r-1)$ for some $m \in \{1, \dots, M(i)\}$, then
$$
\# \{ k,\ell \in \Delta_i:~ a n_k - b n_\ell = c\} =  \mathcal{O}  (i^2).
$$
\end{enumerate}
\end{enumerate}
\end{lemma}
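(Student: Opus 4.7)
The unifying strategy is to substitute the explicit form $n_k = 2^{2^{i^4}}(2^k + m)$ (for $k \in \Delta_i^{(m)}$) into the equation $a n_k - b n_\ell = c$ and exploit two features of the construction: the super-exponential inter-block growth of the prefactor $2^{2^{i^4}}$, and the fact that inside any single block the sequence reduces, after rescaling, to the very simple lacunary sequence $(2^k+m)$. Throughout the plan below, all constants are allowed to depend on $a, b, \varepsilon, K, R, d$ but not on $c$, $i$, $m$, or $N$.

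For parts (i) and (ii), which concern cross-block contributions ($i_1 \ne i_2$) and, in part (i), the case $a/b \ne 2^r$, I would divide the substituted equation through by $2^{2^{\min(i_1,i_2)^4}}$ and observe that the two remaining terms live on incomparable scales: one is bounded in size by a polynomial in $2^{R^{\max(i_1,i_2)}}$, while the other carries an additional factor $2^{|2^{i_2^4}-2^{i_1^4}|}$ that is vastly larger; a short case analysis on which side dominates then shows that only $O(1)$ quadruples $(i_1,i_2,k,\ell)$ can solve the equation for any given $c$. For the critical $i_1 = i_2 = i$ subcase of part (i), factoring out $2^{2^{i^4}}$ additionally forces $2^{2^{i^4}} \mid c$, so for each nonzero $c$ only one $i$ is admissible; the residual equation then has the form $a \cdot 2^k - b \cdot 2^\ell = c''$ with $c''$ a fixed integer, and a standard Zygmund-type differencing argument shows that two distinct solutions would force $a/b$ to equal $2^s(2^u-1)/(2^v-1)$ for some $s \in \mathbb{Z}$ and $u,v \in \mathbb{N}$, incompatible with $a/b$ not being a power of $2$. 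The degenerate case $c=0$ is handled analogously.

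For part (iii), restrict both $k \in \Delta_i^{(m_1)}$ and $\ell \in \Delta_i^{(m_2)}$ to the same block $\Delta_i$, and write $a = 2^r b$. Substituting gives
\[
b \cdot 2^{2^{i^4}}\bigl(2^{k+r} - 2^\ell + 2^r m_1 - m_2\bigr) = c,
\]
which forces $2^{2^{i^4}} b \mid c$; setting $\gamma := c/(2^{2^{i^4}} b)$ reduces the problem to counting integer solutions of $2^{k+r} - 2^\ell = \gamma - 2^r m_1 + m_2 =: A(m_1,m_2)$ with $k \in \Delta_i^{(m_1)}$, $\ell \in \Delta_i^{(m_2)}$, and $(m_1,m_2) \in \{1,\dots,M(i)\}^2$. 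If $c = 2^{2^{i^4}} b m (2^r-1)$, so $\gamma = m(2^r-1)$, then the diagonal choice $m_1 = m_2 = m$ yields $A(m,m)=0$, whose solutions are exactly $\ell = k+r$; by \eqref{mi_card} this contributes $\#\Delta_i^{(m)} - r = R^i/\lceil i^{1-\varepsilon}\rceil - r + O(1)$ ``main'' solutions. For every other $(m_1,m_2)$ the integer $A(m_1,m_2)$ is nonzero with $|A(m_1,m_2)| = O(i^{1-\varepsilon})$; a $2$-adic valuation argument (writing $2^{\min(k+r,\ell)}(2^{|k+r-\ell|}-1) = \pm A$ and bounding $v_2(A)$) caps the number of solutions by $O(\log|A|) = O(\log i)$, and summing over the $M(i)^2 = O(i^{2(1-\varepsilon)})$ pairs absorbs into the claimed $O(i^2)$ error. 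In case (iii)(b), $A(m_1,m_2) \ne 0$ for every $(m_1,m_2)$, and the same estimate yields $O(i^2)$ overall.

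The hardest part will be the cross-block analysis in parts (i) and (ii): the heuristic ``super-exponential growth excludes cross-block solutions'' is transparent, but rigorously ruling out all exotic configurations uniformly in $c$ requires a careful split on the relative sizes of the terms, since the side of the equation left after factoring must still be shown to be incompatible with the specific divisibility properties imposed by $c$. Once the substitution is carried out, the within-block case (iii) is essentially a counting problem for representations of a given integer as a difference of two powers of $2$, which is governed entirely by the $2$-adic valuation.
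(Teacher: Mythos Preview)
Your treatment of parts (ii) and (iii) is broadly aligned with the paper's argument, though the paper obtains the sharper ``at most one solution per $(m_1,m_2)$ with $k+r\neq\ell$'' directly from uniqueness of binary expansions rather than your coarser $O(\log|A|)$ bound. One small gap in your (iii)(b): you assert $A(m_1,m_2)\ne 0$ for every pair, but $c=2^{2^{i^4}}b(2^r m_1-m_2)$ with $m_1\ne m_2$ is not excluded by the hypothesis; the paper closes this by observing that pairs $(k,\ell)$ with $\ell=k+r$ lying in \emph{different} sub-blocks number only $O(M(i))=O(i)$.

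Part (i), however, has a genuine gap. First, the residual within-block equation is not $a\cdot 2^k-b\cdot 2^\ell=c''$ but $a\cdot 2^k-b\cdot 2^\ell=c'-am_1+bm_2$, so you are facing $M(i)^2\approx i^{2(1-\varepsilon)}$ different right-hand sides, not one. Second, your claim that ``$a/b=2^s(2^u-1)/(2^v-1)$ is incompatible with $a/b$ not being a power of $2$'' is false: take $a/b=3=2^0(2^2-1)/(2^1-1)$. What \emph{is} true is that a fixed $a/b$ admits only $O(1)$ such representations, so each individual equation $a\cdot 2^k-b\cdot 2^\ell=c''$ with $c''\ne 0$ has $O(1)$ solutions; but multiplying by the $M(i)^2$ right-hand sides gives only $O(i^2)$. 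Since the admissible block index $i$ grows with $c$, this is not $O(1)$ uniformly in $c$, which is what (i) demands.

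The paper sidesteps this entirely with a different idea: because $a/b$ is not a power of $2$ and $n_{k+1}/n_k\to 2$ (within blocks) or $\to\infty$ (between blocks), the interleaved sequence obtained by sorting $\{an_k:k\ge 1\}\cup\{bn_k:k\ge 1\}$ in increasing order is itself lacunary with a growth factor depending only on $a,b$. Applying Zygmund's lemma (Lemma~\ref{zyg_lemma}) once to this merged sequence yields $\#\{k,\ell:an_k-bn_\ell=c\}=O(1)$ globally and uniformly in $c$, with no block decomposition and no case analysis on the $m$-shifts needed.
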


Before proving Lemma \ref{lemma_dio}, we recall a general result about differences of elements of lacunary sequences.

\begin{lemma}[{\cite[p.203]{Zyg}}] \label{zyg_lemma}
Let $(m_k)_{k \geq 1}$ be a positive sequence of integers satisfying \eqref{had} for some $q>1$. Then for all $c \in \mathbb{Z}_{\geq 0}$,
$$
\# \left\{1 \leq k,\ell \leq N,~k \neq \ell:~m_k - m_\ell = c \right\} = \mathcal{O}(1),
$$
where the implied constant depends only on the growth factor $q$.
\end{lemma}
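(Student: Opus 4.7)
The plan is to show that all indices $k$ appearing in a solution $(k,\ell)$ with $k>\ell$ must satisfy $m_k \in [c,\, qc/(q-1)]$, and then exploit the Hadamard gap condition a second time to conclude that the number of indices of $(m_k)$ landing in such a short interval (in the multiplicative sense, with ratio $q/(q-1)$) is bounded solely in terms of $q$.

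First I would reduce to the case $c\geq 1$ and to ordered pairs with $k>\ell$: since the sequence is strictly increasing (being a positive integer sequence with $m_{k+1}/m_k \geq q>1$), the relation $m_k - m_\ell = c \geq 0$ forces $k \geq \ell$, and $c=0$ yields no valid pair at all (here one uses $k\neq\ell$). So from now on assume $c\geq 1$ and $k>\ell$.

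Next, the key two-sided localization of $m_k$. On the one hand, $m_k = m_\ell + c > c$. On the other hand, since $k>\ell$ implies $\ell \leq k-1$ and hence $m_\ell \leq m_{k-1} \leq m_k/q$ by the Hadamard gap condition, the identity $m_k - m_\ell = c$ rearranges to
\[
m_k\Bigl(1-\frac{1}{q}\Bigr) \leq m_k - m_\ell = c,
\]
so $m_k \leq \frac{q}{q-1}\, c$. Thus every $k$ coming from a solution pair lies in the set
\[
\mathcal{K}(c) := \Bigl\{ k\geq 1 : c \leq m_k \leq \tfrac{q}{q-1}\, c \Bigr\}.
\]

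Finally, I would show $\#\mathcal{K}(c)$ is bounded by a constant depending only on $q$, independently of $c$. If $k_1 < k_2$ both lie in $\mathcal{K}(c)$, then by lacunarity
\[
q^{k_2 - k_1} \leq \frac{m_{k_2}}{m_{k_1}} \leq \frac{q/(q-1)\cdot c}{c} = \frac{q}{q-1},
\]
so $k_2 - k_1 \leq 1 + \log_q \bigl(1/(q-1)\bigr)$. Hence $\#\mathcal{K}(c) \leq C(q)$ for a constant depending only on $q$. Since for each $k\in\mathcal{K}(c)$ there is at most one $\ell$ with $m_\ell = m_k - c$ (strict monotonicity), the total number of solution pairs is at most $C(q)$, which proves the lemma.

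I do not anticipate a genuine obstacle here: the proof is a short two-step argument where each step is a single application of the gap condition. The only point that requires a moment of care is to recognize that the Hadamard condition should be used both to upper-bound $m_k$ relative to $c$ (via $m_\ell \leq m_k/q$) and to count integers $k$ whose corresponding $m_k$ values lie in a fixed multiplicative window; everything else is bookkeeping, and the case $c=0$ is disposed of by the strict monotonicity of the sequence.
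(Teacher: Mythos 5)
Your proof is correct, and since the paper simply cites Zygmund for this lemma without reproducing an argument, there is nothing to diverge from: your two applications of the gap condition (first to localize $m_k$ in the multiplicative window $[c,\,qc/(q-1)]$, then to bound the number of indices in that window by $1+\log_q\bigl(q/(q-1)\bigr)$) constitute the standard classical proof, with the $c=0$ and uniqueness-of-$\ell$ points handled properly.
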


\begin{proof}[Proof of Lemma \ref{lemma_dio}]
Recalling the definition of $L(N,a,b,c)$ in Equation \eqref{L_def}, we assume that $a,b \in \mathbb{N}$ with $a \neq b$, and that $c \in \mathbb{Z}_{\geq 0}$.
\vskip 1mm

$(i)$ Assume $\frac{a}{b}$ is not of the form $2^r$ for any $r \in \mathbb{Z}$. Note that the number $m$ in the definition in line \eqref{nk_def} is much smaller than $2^k$ (for large $k$). Thus, we have (under slight abuse of the limit notation) that, as $k\to\infty$,
\begin{equation} \label{grow_1}
\frac{n_{k+1}}{n_k} \to 2 \qquad \text{on those $k$ and $k+1$ belonging to the same block $\Delta_i$},
\end{equation}
and
\begin{equation} \label{grow_2}
\frac{n_{k+1}}{n_k} \to \infty \qquad \text{on those $k$ and $k+1$ belonging to different blocks $\Delta_i$ and $\Delta_{i+1}$.}
\end{equation}
Since $\frac{a}{b}$ is not a power of 2 by assumption, this shows that the set
\begin{equation} \label{card_of_this}
\# \left\{k,\ell \geq 1:~a n_k - b n_\ell = 0 \right\}
\end{equation}
is finite. To see this, we assume there are infinitely many pairs $k, \ell \in \mathbb{N}$ such that $a n_k - bn_{\ell} = 0$, i.e.,\ $\frac{n_k}{n_\ell} = \frac{a}{b}$. Recall that $a$ and $b$ are assumed to be fixed. Thus, by \eqref{grow_2} there can only be finitely many solutions of this equation for which $k$ and $\ell$ belong to different blocks. Assume  that $\frac{a}{b} > 1$, which means that $\frac{n_k}{n_\ell} = \frac{a}{b}$ is only possible if $k > \ell$ (the case $\frac{a}{b}<1$ can be treated similarly; the case $\frac{a}{b}=1$ is impossible since $\frac{a}{b}$ is not an integer power of 2 by assumption). Since $\frac{a}{b}$ is not an integer power of 2 by assumption, there exists a $\delta>0$ such that $\frac{a}{b} \not\in \bigcup_{j \geq 1} \left[(2-\delta)^j, (2+\delta)^j \right]$. However, on the other hand, by \eqref{grow_1} for all sufficiently large $k$ and $\ell$ with $k > \ell$ which belong to the same block, we have 
\begin{equation*}
    (2- \delta )^{k - \ell} \leq \frac{n_k}{n_{\ell}}  \leq (2 + \delta )^{k - \ell}.
\end{equation*}
Thus, $\frac{n_k}{n_\ell} = \frac{a}{b}$ is possible for only finitely many pairs $(k,\ell)$ which belong to the same block. As noted above, there are also only finitely many solutions $(k,\ell)$ which belong to different blocks. Overall, the cardinality of the set in \eqref{card_of_this} is $\mathcal{O}(1)$. \\

We now form the set-theoretic union
$$
A = \bigcup_{k \geq 1} \{a n_k, b n_k\},
$$
and write the elements of $A$ as a sequence $(m_k)_{k \geq 1}$ (sorted in increasing order). Because of \eqref{grow_1} and \eqref{grow_2}, we have $\liminf_{k \to \infty} \frac{m_{k+1}}{m_k} > 1$ so that the sequence $(m_k)_{k \geq 1}$ is a lacunary sequence with some suitable growth factor (which depends on $a$ and $b$, but these are assumed to be fixed). Thus, by Lemma \ref{zyg_lemma}, we have
$$
\# \left\{k,\ell \geq 1:~a n_k - b n_\ell = c \right\} \leq \# \left\{k,\ell \geq 1,~k \neq \ell:~m_k - m_\ell = c \right\} = \mathcal{O}(1),
$$
where the implied constant in the $\mathcal{O}$-term is independent of $c$; note that the first estimate is trivial since on the right-hand side we have twice as many equations (and thus potential solutions). Summarizing our results, in case (i) of the Lemma we have
\begin{equation} \label{lemma_claim_1}
\# \left\{k,\ell \geq 1:~a n_k - b n_\ell = c \right\} = \mathcal{O} (1),
\end{equation}
uniformly in $c \geq 0$. \\

$(ii)$ Now assume $\frac{a}{b} = 2^r$ for some $r \in \mathbb{Z} \backslash\{0\}$. We first show that there are not many solutions where $k$ and $\ell$ come from different blocks. Assume that $k \in \Delta_{i_1}$ and $\ell \in \Delta_{i_2}$ such that $i_1 < i_2$. Then, whenever $i_2$ is so large that $2^{r+1} \leq 2^{2^{i_2}}$ (which excludes only finitely many values of $i_1$ and $i_2$), using the trivial estimate $i_1 \leq k \leq 2^k$ together with $2^{i_2} + 2^{(i_2-1)^4} < 2^{i_2^4}$, we have
\begin{eqnarray*}
a n_k & \leq & b 2^r 2^{2^{i_1^4}} \left(2^{k} + i_1 \right) \\
& \leq & b 2^{r+1} 2^{2^{i_1^4}} 2^{k} \\
& < & b 2^{2^{i_2}} 2^{2^{(i_2-1)^4}} 2^{\ell}\\
& \leq  & b 2^{2^{i_2^4}} 2^{\ell} \\
& \leq & b n_\ell.
\end{eqnarray*}
Consequently, 
$$
a n_k - b n_\ell = c
$$
is not possible for any non-negative $c$ when $i_2$ is sufficiently large. Assume again that $k \in \Delta_{i_1}$ and $\ell \in \Delta_{i_2}$, but now such that $i_1 > i_2$. Then similar to the previous calculation, assuming that $i_1$ is sufficiently large so that $2^{-r+1} \leq 2^{2^{i_1}}$, and now using that $2^{i_1^4} - 2^{i_1} - 2^{(i_1-1)^4} \geq 2^{i_1^3}$ holds for all $i_1 \in \mathbb{N}$, we have (recall that $\frac{a}{b}=2^r $ and trivially $i_2 \leq \ell \leq 2^{\ell}$)
$$
\frac{a n_k}{b n_\ell} \geq \frac{2^{2^{i_1^4}} 2^k}{2^{-r+1} 2^{2^{i_2^4}} 2^\ell} \geq \frac{2^{2^{i_1^4}}}{2^{2^{i_1}} 2^{2^{(i_1-1)^4}}} \geq 2^{i_1^3}.
$$
Thus, whenever $i_1$ is sufficiently large, then
$$
a n_k \left(1 - 2^{-i_1^3} \right) = a n_k - \frac{a n_k}{2^{i_3^3}} \leq a n_k - b n_\ell  \leq a n_k.
$$
Consequently, for sufficiently large $i_1$ (assuming that ``sufficiently large'' includes the fact that $i_1 \geq 10$), the equality $a n_k - b n_\ell = c$ requires that 
\begin{equation}
\label{Eq_Interval_nk}
n_k \in \left[ \frac{c}{a}, \frac{c}{a}  \left( 1 - 2^{-1000}\right)^{-1} \right].
\end{equation}
We claim that, uniformly in $c \in \mathbb{Z}_{\geq 0}$, there are only finitely many $k \in \mathbb{N}$ such that \eqref{Eq_Interval_nk} holds; more precisely, if $k$ is sufficiently large, then \eqref{Eq_Interval_nk} uniquely determines $k$. Indeed, by \eqref{grow_1} and \eqref{grow_2} we have $\frac{n_{k}}{n_{k-1}}\geq \frac{3}{2}$ for all sufficiently large $k$. Thus whenever we have \eqref{Eq_Interval_nk}, then 
$$
n_{k-1} \leq \frac{2}{3} \frac{c}{a}  \left( 1 - 2^{-1000}\right)^{-1} \not\in  \left[ \frac{c}{a}, \frac{c}{a}  \left( 1 - 2^{-1000}\right)^{-1} \right]
$$
as well as
$$
n_{k+1} \geq \frac{3}{2} \frac{c}{a} \not\in \left[ \frac{c}{a}, \frac{c}{a}  \left( 1 - 2^{-1000}\right)^{-1} \right], 
$$
with the possible exception of finitely many indices. Thus, overall we have shown that in the case $\frac{a}{b} = 2^r$ for some $r \in \mathbb{Z} \backslash \{0\}$, we have
\begin{equation} \label{lemma_claim_2}
\underbrace{\sum_{i_1=1}^\infty \sum_{i_2=1}^\infty}_{i_1 \neq i_2} \# \left\{k\in \Delta_{i_1},~\ell \in \Delta_{i_2}:~a n_k - b n_\ell = c \right\} = \mathcal{O} (1),
\end{equation}
with an implied constant that is independent of $c \geq 0$. This settles case (ii) of the lemma.\\

$(iii)$ Now we are in the situation where $k, \ell$ are contained in the same block $  \Delta_i$ for some $i \in\mathbb N$. We establish a few general estimates which we shall use in the proof of both $(iii)$ $ a)$ and $(iii) $ $b)$. 

Recall that we are in the case where there exists an $r \in \mathbb{Z} \backslash \{ 0\}$ with $\frac{a}{b}= 2^r$ and let $k , \ell \in \Delta_i$ with $k + r > \ell$. Then, for suitable $m_1,m_2 \in \{ 1, \ldots, M(i) \}$, we have
\begin{eqnarray*}
a n_k - b n_\ell & = & 2^r b n_k - b n_\ell \\
& = & 2^{2^{i^4}} b (2^r 2^k +m_1  - 2^\ell - m_2) \\
& = & 2^{2^{i^4}} b (2^{k+r} +m_1  - 2^\ell - m_2).
\end{eqnarray*}
Thus, $a n_k - b n_\ell = c$ can only hold when 
$$2^{k+r}- 2^\ell = \frac{2^{2^{i^4}} b (m_2 - m_1) + c}{2^{2^{i^4}} b },
$$
and for given $b,c,r,m_1,m_2$ (whence the right-hand side of this equation is fixed) there is at most one pair $(k,\ell)$ with $k+r > \ell$ for which this can hold (this is essentially the uniqueness of the dyadic representation of integers). There are $\mathcal{O}(M(i)^2) = \mathcal{O} (i^{2})$ many possible values for $m_1, m_2$, so the total number of solutions $(k,\ell)$ of $a n_k - b n_\ell = c$ with $k + r > \ell$ and $k,\ell \in \Delta_i$ is at most $\mathcal{O}(i^{2})$, uniformly in $c \geq 0$. The same argument can be applied to the case $k + r < \ell$. Thus, we have established
\begin{equation}
    \label{Eq_Sol_Delta_i}
    \sup_{c \in \mathbb{Z}_{\geq 0} } \# \left \{ k, \ell \in \Delta_i : k+ r \neq \ell,  a n_k - b n_{\ell} = c \right\} =  \mathcal{O}(i^2).
\end{equation}
The number of pairs of indices $k$ and $\ell$ with $k + r = \ell$, which are contained in different sub-blocks $k \in \Delta_i^{(m_1)}$ and $\ell \in \Delta_i^{(m_2)}$ for $m_1 \neq m_2$, is $\mathcal{O} (M(i)) = \mathcal{O} (i)$. Thus, by \eqref{Eq_Sol_Delta_i} we have
\begin{align}
\notag
   \# \left \{ k, \ell \in \Delta_i :  a n_k - b n_{\ell} = c \right \}
   &=   \sum_{m'=1}^{M(i)} \#  \{ k, \ell \in \Delta_i^{(m')} : k+ r = \ell , a n_k - b n_{\ell} = c \} +  \mathcal{O}(i) + \mathcal{O}(i^2)\\
   \label{Eq_Sol_Delta_i_2}
   &=  \sum_{m'=1}^{M(i)} \#  \{ k, \ell \in \Delta_i^{(m')} : k+ r = \ell , a n_k - b n_{\ell} = c \} + \mathcal{O}(i^2).
\end{align}
In the following, we only consider the case where $r > 0$ (the case $r<0$ can be treated in an analogous way). We need to count the number of $k$ and $\ell$ such that $k+r=\ell$ and $k,\ell \in \Delta_i^{(m')}$ for some $m' \in \left \{1, \ldots, M(i) \right\}$, where we have
\begin{equation}
\label{Eq_Sol_Delta_i_m}
\begin{split}
a n_k - b n_\ell 
& = 2^r b n_k - b n_\ell \\
&  =  b \left( 2^r n_k - n_\ell \right) \\
& = 2^{2^{i^4}} b (2^r (2^k + m') - (2^\ell + m')) \\
& =  2^{2^{i^4}} b (\underbrace{2^{r+k} - 2^\ell}_{=0} + 2^r m' - m')) \\
& =  2^{2^{i^4}} b m' (2^r-1).
\end{split}
\end{equation}
Now we prove $(iii)$ $a)$, where we assumed that $c$ is of the special form $c= 2^{2^{i^4}} bm (2^r-1)$ for some $m \in \{1, \ldots, M(i) \}$. By \eqref{Eq_Sol_Delta_i_m}, the equation $a n_k - b n_{\ell} = c$ is satisfied if and only if $m=m'$ and thus 
\begin{equation}
\label{Eq_Major_Contr}
\begin{split}
\sum_{m'=1}^{M(i)} \# \left \{ k, \ell \in \Delta_i^{(m')} : k+ r = \ell , a n_k - b n_{\ell} = c \right \} 
&= 
\# \{k,\ell \in \Delta_i^{(m)}:~k + r= \ell\}  \\
& = \# \Delta_i^{(m)} - r.
\end{split}
\end{equation}
Combining \eqref{mi_card}, \eqref{Eq_Sol_Delta_i_2} and \eqref{Eq_Major_Contr}, we get in case $(iii)$ $a)$
\begin{equation*}
    \# \left \{ k, \ell \in \Delta_i : a n_k - b n_{\ell} = c \right \} = \frac{R^i}{\lceil i^{1- \varepsilon} \rceil } - r + \mathcal{O}(i^2), 
\end{equation*}
as desired.\\

Now assume that we are in case $(iii)$ $b)$, i.e., $c$ is not of the form $ 2^{2^{i^4}} b m (2^r -1)$ for any $m \in \left \{ 1, \ldots , M(i) \right \}$. Then \eqref{Eq_Sol_Delta_i_m} yields
\begin{equation*}
  \sum_{m'=1}^{M(i)} \# \left \{ k, \ell \in \Delta_i^{(m')} : k+ r = \ell , a n_k - b n_{\ell} = c \right \} = 0  
\end{equation*}
and from \eqref{Eq_Sol_Delta_i_2}, we obtain
\begin{equation*}
\# \left \{ k, \ell \in \Delta_i :  a n_k - b n_{\ell} = c \right \}=  \mathcal{O}(i^2),    
\end{equation*}
as claimed.
\end{proof}

We can now prove that our gap sequence $(n_k)_{k\geq 1}$ satisfies the desired Diophantine condition.

\begin{cor} \label{cor_dio}
The sequence $(n_k)_{k \geq 1}$ constructed in this section satisfies the Diophantine condition \eqref{dio_LIL_2} of Theorem \ref{th1}.
\end{cor}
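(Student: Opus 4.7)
The plan is to apply Lemma \ref{lemma_dio} block-by-block and sum the contributions. Fix $a, b \in \mathbb{N}$ with $a \neq b$. If $a/b$ is not of the form $2^r$ for any $r \in \mathbb{Z}$, then Lemma \ref{lemma_dio}(i) immediately yields $L(N,a,b,c) = \mathcal{O}(1)$ uniformly in $c$, which is far stronger than required. So I assume from now on that $a/b = 2^r$ for some $r \in \mathbb{Z} \setminus \{0\}$. Given $N$, I let $I = I(N)$ denote the unique index with $N \in \Delta_I$; a direct inspection of \eqref{delta_def} shows $R^I \asymp N$, hence $I = \frac{\log N}{\log R} + \mathcal{O}(1)$. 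I will then partition the pairs $(k, \ell)$ counted by $L(N,a,b,c)$ according to block membership: the cross-block contribution (pairs with $k$ and $\ell$ lying in different blocks $\Delta_{i_1}, \Delta_{i_2}$) is uniformly $\mathcal{O}(1)$ in $c$ by Lemma \ref{lemma_dio}(ii), so the task reduces to controlling $\sum_{i=1}^{I}\#\{k, \ell \in \Delta_i : a n_k - b n_\ell = c\}$.

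The central observation, and what I expect to be the only non-routine step, is that for each fixed $c$ at most one index $i_0 \in \mathbb{N}$ can fall into the ``bad'' case (iii)(a) of Lemma \ref{lemma_dio}, while every other block $i$ contributes only the tame bound $\mathcal{O}(i^2)$ via case (iii)(b). Suppose two distinct indices $i < i'$ both admitted a representation $c = 2^{2^{i^4}} b m (2^r - 1) = 2^{2^{i'^4}} b m' (2^r - 1)$ with $m \in \{1, \ldots, M(i)\}$ and $m' \in \{1, \ldots, M(i')\}$; dividing the two relations forces $m = 2^{2^{i'^4} - 2^{i^4}} m'$, whose right-hand side grows super-exponentially in $i$, whereas $m \leq M(i) = \lceil i^{1-\varepsilon}\rceil$ grows only polynomially, a contradiction. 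This rapid separation of the prefactors is precisely the purpose of the towering factor $2^{2^{i^4}}$ in the definition \eqref{nk_def}, and it is the arithmetic heart of the estimate.

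Assembling these bounds, I arrive at
\begin{equation*}
L(N,a,b,c) \;\leq\; \frac{R^{i_0}}{\lceil i_0^{1-\varepsilon}\rceil} \;+\; \sum_{i=1}^{I}\mathcal{O}(i^2) \;=\; \frac{R^{i_0}}{\lceil i_0^{1-\varepsilon}\rceil} + \mathcal{O}\bigl((\log N)^3\bigr),
\end{equation*}
with the convention that the first term is taken to be $0$ if no admissible $i_0 \leq I$ exists. The error $\mathcal{O}((\log N)^3)$ is trivially $o\bigl(N/(\log N)^{1-\varepsilon}\bigr)$. For the leading term, I will observe that the function $i \mapsto R^i/i^{1-\varepsilon}$ is eventually monotone increasing (its logarithmic derivative $\log R - (1-\varepsilon)/i$ is positive once $i$ exceeds a constant depending only on $R$ and $\varepsilon$), so that for every $i_0 \leq I$ one has $R^{i_0}/\lceil i_0^{1-\varepsilon}\rceil = \mathcal{O}\bigl(R^I / I^{1-\varepsilon}\bigr)$. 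Substituting $R^I \asymp N$ and $I \asymp \log N$ yields the desired bound $\mathcal{O}\bigl(N/(\log N)^{1-\varepsilon}\bigr)$ and establishes \eqref{dio_LIL_2}.
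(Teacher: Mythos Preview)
Your proof is correct. The route differs from the paper's in one respect worth noting. The paper does not isolate a single ``bad'' block; it simply applies the worst-case bound $\mathcal{O}(R^i/i^{1-\varepsilon})$ from Lemma~\ref{lemma_dio} to \emph{every} block $i\le I$ and then controls the resulting sum $\sum_{i=1}^{I} R^i/i^{1-\varepsilon}$ by a geometric-series argument, obtaining $\sum_{i=1}^{I} R^i/i^{1-\varepsilon}\le (1-2/R)^{-1}R^I/I^{1-\varepsilon}$. Your argument instead makes the additional arithmetic observation that the representation $c=2^{2^{i^4}}bm(2^r-1)$ can hold for at most one index $i_0$, so that all other blocks fall into case (iii)(b) and contribute only $\mathcal{O}(i^2)$; you then bound the single surviving term by eventual monotonicity of $i\mapsto R^i/i^{1-\varepsilon}$. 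Your uniqueness observation is genuinely sharper and explains \emph{why} the sum is dominated by a single block, whereas the paper's geometric summation is shorter and needs no such structural input. Both arrive at the same $\mathcal{O}(N/(\log N)^{1-\varepsilon})$ bound.
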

\begin{proof}
Let $a,b\in\mathbb N$ be fixed. Let $N\in\mathbb N$ be given, and let $I \in\mathbb N$ be such that $N \in \Delta_I$. We observe the following (note that $ \left( \frac{i+1}{i} \right)^{1- \varepsilon} \leq 2$ for all $i \in \mathbb{N}$)
\begin{align*}
    \sum_{i=1}^I \frac{R^i}{i^{1-\varepsilon}} & =\frac{R^I}{I^{1-\varepsilon}} +  \sum_{i=1}^{I-1} \frac{R^i}{i^{1-\varepsilon}} \\
    & \leq \frac{R^I}{I^{1-\varepsilon}} + \frac{2}{R} \sum_{i=1}^{I} \frac{R^i}{i^{1-\varepsilon}}.
\end{align*}
Rearranging the terms and using the fact that there exists a constant $c > 0$ (depending on $R)$ such that $ c \log N  \leq I $ and $R^I \leq R N $ yields 
\begin{equation*}
\sum_{i=1}^I \frac{R^i}{i^{1-\varepsilon}}  \leq \left( 1 -\frac{2}{R} \right)^{-1} \frac{R^I}{I^{1-\varepsilon}} = \mathcal{O} \left( \frac{ N}{ (\log N) ^{1- \varepsilon}} \right).     
\end{equation*}
Combining all the worst-case estimates from Lemma \ref{lemma_dio}, the previous calculation shows that
$$
\# \{k,\ell \leq N:~a n_k - b n_\ell = c\} = \mathcal{O}\left( \sum_{i=1}^I \frac{R^i}{i^{1-\varepsilon}} \right) = \mathcal{O}\left( \frac{N}{(\log N)^{1-\varepsilon}} \right), 
$$
uniformly in $c \in \mathbb{Z}_{\geq 0}$, as claimed.
\end{proof}

\section{Further ingredients -- Gaposhkin's Berry-Esseen result}

We will need a Berry--Esseen type quantitative central limit theorem for the particular lacunary trigonometric sum $\sum_{k=1}^N \cos( 2\pi 2^k x)$. As in the introduction, $\lambda$ denotes Lebesgue measure and $\Phi$ denotes the standard normal distribution function.
\begin{lemma}[Gaposhkin \cite{gapo}] \label{lemma_gaposhkin}
Let $\lambda_1, \dots, \lambda_N$ be non-negative real numbers such that 
$$
\sum_{k=1}^N \lambda_k^2 = 1. 
$$
Set $\Lambda_N = \max_{1 \leq k \leq N} \lambda_k$. Then
$$
\sup_{t \in \mathbb{R}}\left| \lambda\left( \left\{ x \in (0,1):~ \sqrt{2} \sum_{k=1}^N \lambda_k \cos( 2\pi 2^k x) < t \right\} \right) - \Phi(t) \right| = \mathcal{O} \left(\Lambda_N^{1/4} \right), 
$$
where the implied constant is absolute.
\end{lemma}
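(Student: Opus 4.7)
The plan is to follow the classical Berry--Esseen route via characteristic functions, tailored to the lacunary structure. Let $S_N(x) := \sqrt{2}\sum_{k=1}^N \lambda_k \cos(2\pi 2^k x)$, and let $\phi_N(u) := \int_0^1 e^{i u S_N(x)}\,dx$ denote its characteristic function under Lebesgue measure. First I would apply Esseen's smoothing inequality,
\[
\sup_{t\in\mathbb{R}} \bigl|\lambda(\{S_N<t\}) - \Phi(t)\bigr| \;\leq\; \frac{1}{\pi}\int_{-T}^T \left|\frac{\phi_N(u) - e^{-u^2/2}}{u}\right|du \;+\; \frac{C}{T},
\]
and optimise $T$ at the end against the bound we obtain on $|\phi_N(u)-e^{-u^2/2}|$. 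The heuristic target is $T\asymp \Lambda_N^{-1/4}$.

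The key analytic tool is the Jacobi--Anger expansion $e^{iz\cos\theta}=\sum_{m\in\mathbb{Z}} i^m J_m(z)\, e^{im\theta}$, applied to each factor of $e^{iuS_N(x)}$. Expanding the product and integrating term by term over $x\in[0,1]$ picks out exactly those multi-indices $\mathbf{m}\in\mathbb{Z}^N$ for which $\sum_{k=1}^N m_k 2^k=0$, giving
\[
\phi_N(u) \;=\; \sum_{\mathbf{m}\colon \sum m_k 2^k=0}\, \prod_{k=1}^N i^{m_k}\, J_{m_k}\!\bigl(u\sqrt{2}\lambda_k\bigr).
\]
The trivial solution $\mathbf{m}=\mathbf{0}$ produces the \emph{main term} $\prod_k J_0(u\sqrt{2}\lambda_k)$. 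Using $J_0(z)=1-z^2/4+O(z^4)$ (valid for $|z|\leq 1$) together with $\sum_k \lambda_k^2=1$ and $\sum_k \lambda_k^4 \leq \Lambda_N^2$, one obtains
\[
\prod_{k=1}^N J_0(u\sqrt{2}\lambda_k) \;=\; e^{-u^2/2}\bigl(1 + O(u^4 \Lambda_N^2)\bigr),
\]
which already matches the Gaussian characteristic function up to an acceptable error on the intended interval $|u|\leq T$.

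The remaining work is to control the \emph{error term} coming from nontrivial $\mathbf{m}\in\mathbb{Z}^N\setminus\{\mathbf{0}\}$ with $\sum m_k 2^k = 0$. By uniqueness of binary representation, any such solution must have some coordinate of absolute value at least $2$, and more generally any set of $J$ active indices forces the $|m_k|$'s to grow sufficiently to produce cancellation. Using the standard bound $|J_m(z)| \leq (|z|/2)^{|m|}/|m|!$ for $|z|\leq 1$, each contributing product is of order $(u\Lambda_N)^{s(\mathbf{m})}$ for some total power $s(\mathbf{m})\geq 2$. Organising the sum by the active support of $\mathbf{m}$ and by the shape of its signed binary carry pattern, the total non-trivial contribution can be bounded by $O((u\Lambda_N)^2)$ (possibly with an extra logarithmic factor that is absorbed into the final rate). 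Substituting $|\phi_N(u)-e^{-u^2/2}|\ll e^{-u^2/4}\bigl(u^4\Lambda_N^2 + u^2\Lambda_N\bigr)$ into Esseen's inequality and choosing $T=\Lambda_N^{-1/4}$ balances both terms and yields the claimed $\mathcal{O}(\Lambda_N^{1/4})$.

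The main obstacle will be the combinatorial bookkeeping in the error-term bound: one must enumerate nontrivial integer solutions of $\sum m_k 2^k = 0$ without overcounting, and match the exponential decay of the Bessel factors (arising from the forced large values of $|m_k|$) against the number of such patterns. Achieving $(u\Lambda_N)^2$ (rather than a weaker power) is what ultimately forces the exponent $1/4$ in the final rate after balancing in Esseen's inequality.
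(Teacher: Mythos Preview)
The paper does not prove this lemma; it is quoted from Gaposhkin and used as a black box. There is thus no ``paper's own proof'' to compare against --- your sketch is an attempt to supply a proof for a cited result.

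Your characteristic-function route via Esseen's inequality and the Jacobi--Anger expansion is a sound strategy and can be pushed through, but several details are misstated. For any nontrivial $\mathbf m\in\mathbb Z^N$ with $\sum_k m_k 2^k=0$ one has $\sum_k|m_k|\ge 3$, not $\ge 2$: a single nonzero coordinate is impossible, and two nonzero coordinates $m_j,m_k$ with $j<k$ force $|m_j|=|m_k|\,2^{k-j}\ge 2$, hence total at least $3$. Accordingly, after summing the degree-$3$ patterns over their position and using $\sum_k\lambda_k^2=1$, the leading nontrivial contribution is of order $u^3\Lambda_N\,e^{-u^2/2}$, not $(u\Lambda_N)^2$. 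With that correction the Esseen integral is already $O(\Lambda_N)$ regardless of $T$, so $T=\Lambda_N^{-1/4}$ is not a genuine balance point; the binding constraint is rather the range $|u|\lesssim\Lambda_N^{-1/2}$ on which the main-term estimate $\prod_k J_0(u\sqrt{2}\lambda_k)\approx e^{-u^2/2}$ is controlled. None of this blocks the stated conclusion --- carried out carefully the argument gives a rate at least as good as $\Lambda_N^{1/4}$. The one step that is genuinely only gestured at is the full summation over all integer solutions of $\sum m_k 2^k=0$ (the ``carry patterns''); that bookkeeping is where the real work sits and would have to be written out to make the proof complete.
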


\section{The law of the iterated logarithm} \label{sec_LIL}
Let $\varepsilon \in (0,1)$ and $K \in (0, \infty)$. We now define our trigonometric polynomial
$$
f(x) = \sum_{j=0}^{d-1} \cos (2 \pi 2^j x),
$$
where $d$ satisfies \eqref{size_d}, i.e., $ \frac{d \sqrt{\varepsilon} }{4} -2 \geq \frac{K \sqrt{d}}{\sqrt{2}} $. We will prove that for this trigonometric polynomial $f$, and for the gap sequence $(n_k)_{k \geq 1}$ we constructed in Section \ref{sec_construction}, the conclusion of Theorem \ref{th1} is indeed satisfied.\\

Clearly, since our cosine functions are uncorrelated, 
\begin{equation} \label{2-norm}
\| f\|_2 = \frac{\sqrt{d}}{\sqrt{2}}.
\end{equation}
Applying the Erd\H os--G\'al law of the iterated logarithm (see Equation \eqref{erd-gal}) together with the triangle inequality gives
\begin{equation} \label{erd-gal-2}
\limsup_{N \to \infty} \frac{\left| \sum_{k=1}^N f(n_k x) \right|}{\sqrt{2 N \log \log N}} \leq \frac{d}{\sqrt{2}} \qquad \text{a.e.}
\end{equation}
Recall that $\# \Delta_i = R^i$. We will show that
\begin{equation} \label{to_show}
\limsup_{i \to \infty} \frac{\left| \sum_{k \in \Delta_i} f(n_k x) \right|}{\sqrt{2 R^i \log \log R^i}} \geq \frac{d \sqrt{\varepsilon}}{2} - 2 \qquad \text{a.e.}
\end{equation}
Assuming this to be true, then with the notation $N(i) := \frac{R^{i+1} - R}{R-1}$ and upon noting that $\{1, \dots, N(i)\} = \Delta_1 \cup \dots \cup \Delta_i = \{ 1, \dots, N(i-1) \} \cup \Delta_i$, for almost all $x \in [0,1]$, we will obtain
\begin{eqnarray*}
 & & \limsup_{i \to \infty} \frac{\left| \sum_{k = 1}^{N(i)} f(n_k x) \right|}{\sqrt{2 N(i) \log \log N(i)}} \\
 & \geq & \limsup_{i \to \infty} \frac{\left| \sum_{k \in \Delta_i} f(n_k x) \right|}{\sqrt{2 N(i) \log \log N(i)}} - \limsup_{i \to \infty} \frac{\left| \sum_{k =1}^{N(i-1)} f(n_k x) \right|}{\sqrt{2 N(i) \log \log N(i)}} \\
& \geq & \underbrace{\limsup_{i \to \infty} \frac{\left| \sum_{k \in \Delta_i} f(n_k x) \right|}{\sqrt{2 R^i \log \log R^i}}}_{\geq \frac{d \sqrt{\varepsilon}}{2}-2 \text{ by \eqref{to_show}}} 
 \underbrace{ \frac{ \sqrt{2 R^{i} \log \log R^{i}}}{\sqrt{2 N(i) \log \log N(i)}}}_{\longrightarrow 1 \text{ as } i \rightarrow \infty} \\
& & - \underbrace{\limsup_{i \to \infty} \frac{\left| \sum_{k =1}^{N(i-1)} f(n_k x) \right|}{\sqrt{2 N(i-1) \log \log N(i-1)}}}_{\leq \frac{d}{\sqrt{2}} \text{ by } \eqref{erd-gal-2}} \underbrace{\lim_{i \to \infty} \frac{\sqrt{2 N(i-1) \log \log N(i-1)}}{\sqrt{2 N(i) \log \log N(i)}}}_{= \frac{1}{\sqrt{R}} \leq \frac{\sqrt{\varepsilon}}{\sqrt{8}} \text{ by \eqref{size_R}}} \\
& \geq & \frac{d \sqrt{\varepsilon}}{4}-2.
\end{eqnarray*}
By \eqref{size_d} and \eqref{2-norm}, we have $\frac{d \sqrt{\varepsilon}}{4}-2 > K \|f\|_2$. Thus, it remains to establish \eqref{to_show}. We note in passing that this chain of calculations actually gives a quantitative form of Theorem \ref{th1}, where the dependence between the factor $K$, the size of $\varepsilon$ from the savings in the Diophantine condition, and the degree $d$ of the trigonometric polynomial are brought into relation. The conclusion of Theorem \ref{th1} is non-trivial when the right-hand side exceeds $\|f\|_2$. One can check that for $K=1$, the inequality \eqref{size_d} holds for $d = 21 \varepsilon^{-1}$, so that for given $\varepsilon$ from the Diophantine condition, we can construct a counterexample to the LIL in its ``truly independent'' form by considering a trigonometric polynomial of degree $\lceil 21 \varepsilon^{-1} \rceil$. As $\varepsilon$ in the Diophantine condition approaches 0, we need to increase the degree of the trigonometric polynomial to get a result which doesn't match with the ``truly independent'' form of the LIL. The main result of this paper is that the Diophantine condition \eqref{dio_LIL} is essentially  optimal when trigonometric polynomials of arbitrary degree are considered; however, we do emphatically \emph{not} claim that this condition is also optimal in the case of trigonometric polynomials whose degree is bounded. For degree 1 (the Erd\H os--G\'al case), no Diophantine condition at all is necessary. For trigonometric polynomials of degree 2, we believe that a condition of order roughly $L(N,a,b,c) = \mathcal{O} \left( \frac{N}{(\log N)^{1/2}} \right)$ should be optimal, and for degree at most $d$, a condition of the form $L(N,a,b,c) = \mathcal{O} \left( \frac{N}{(\log N)^{c(d)}} \right)$ should be optimal, with some suitable constants $c(d)$ for which $c(d) \nearrow 1$ as $d \to \infty$. The relation between $d$ and $\varepsilon$ was not in our main focus when writing this paper, but this seems to be an interesting topic for further research.\\

It remains to prove \eqref{to_show}. For $i \in\mathbb N$, let $\mathcal{F}_i$ be the sigma-field generated by the collection of intervals
$$
\left[ \frac{a-1}{2^{2^{(i+1)^4}}},\frac{a}{2^{2^{(i+1)^4}}} \right), \qquad a = 1, 2, \dots, 2^{2^{(i+1)^4}}.
$$
We set 
$$
  Y_i(x) : = \sum_{k \in \Delta_i} f(n_k x),\quad x \in [0,1],
$$   
as well as
$$
Z_i := \mathbb{E} \left( Y_i \Big| \mathcal{F}_i \right), \qquad i \geq 1,
$$
which, as we shall see in a moment, is a good approximation of the random variable $Y_i$; in probabilistic parlance, the system $(\mathcal{F}_i)_{i \geq 1}$ forms a filtration of the unit interval. Using that
$$
\|f ' \|_\infty \leq \sum_{j=0}^{d-1} 2 \pi 2^j \leq 2 \pi 2^d,
$$
and setting $I_a^i := \left [\frac{a-1}{2^{2^{(i+1)^{4}}}}, \frac{a}{2^{2^{(i+1)^{4}}}} \right)$ for $a \in \left \{ 1, \ldots, 2^{2^{(i+1)^4}} \right\} =: S_i $, where $i \in\mathbb N$, we see that by using the standard estimate $| f(x) -f(y) | \leq \| f' \|_{\infty} | x- y|$,
\begin{align*}
\|Z_i - Y_i \|_\infty &= \sup_{x \in [0,1]} \left | Z_i(x) - Y_i(x) \right | \\
& =\sup_{x \in [0,1]} \left | \sum_{a \in S_i} \sum_{k \in \Delta_i}\left( 2^{2^{(i+1)^4}}   \int_{I_a^i} f(n_kt) dt - f(n_k x) \right) \mathbbm{1}_{I_a^i}(x)  \right | \\
& \leq \max_{a \in S_i} \sup_{x \in I_a^i} \sum_{k \in \Delta_i} \left |  2^{2^{(i+1)^4}}\int_{I_a^i}  \left(f(n_k t)  -  f(n_k x) \right) dt \right | \\
& \leq  \max_{a \in S_i } \sup_{x \in I_a^i} \sum_{k \in \Delta_i} \frac{ ||f'||_{\infty } n_k}{2^{2^{(i+1)^4}}} \\
& = \sum_{k \in \Delta_i} \frac{2 \pi 2^d n_k}{2^{2^{(i+1)^4}}} \\
& \leq \frac{R^i 2 \pi 2^d 2^{2^{i^4}}\left(2^{R^{i+1}} + i \right)}{2^{2^{(i+1)^4}}}, 
\end{align*}
which goes rapidly to zero as $i \to \infty$ (recall that $R$ and $d$ are fixed). Thus, we have 
$$
\limsup_{i \to \infty} \frac{\left| Y_i \right|}{\sqrt{2 R^i \log \log R^i}} \geq \frac{d \sqrt{\varepsilon}}{2} - 2 \qquad \textup{a.e.},
$$
(which is just another way of writing \eqref{to_show}) if and only if
\begin{equation} \label{to_show_2}
\limsup_{i \to \infty} \frac{\left| Z_i \right|}{\sqrt{2 R^i \log \log R^i}} \geq \frac{d \sqrt{\varepsilon}}{2} - 2 \qquad \textup{a.e.},
\end{equation}
and our aim thus becomes to establish \eqref{to_show_2}. For this purpose, we define the sets
$$
A_i := \left\{ x \in [0,1]: ~\left| Z_i \right| \geq \left(\frac{d \sqrt{\varepsilon}}{2}-2 \right) \sqrt{2 R^i \log \log R^i} - 2 d^2 i - 3 \right\}, \qquad i \in \mathbb N,
$$
where the terms which are subtracted on the right-hand side will allow us to incorporate errors which will appear later on in the computations. By construction, $A_i$ is $\mathcal{F}_i$-measurable for all $i \geq 1$. We claim that $A_i$ is independent of $\mathcal{F}_{i-1}$ (and hence also independent of $\mathcal{F}_{i-2}, \mathcal{F}_{i-3}, \ldots $). This follows from the fact that all numbers $n_k$ with $k \in \Delta_i$ are integer multiples of $2^{2^{i^4}}$ and hence the functions $Y_i$ and $Z_i$ are periodic with period-length $\frac{1}{2^{2^{i^4}}}$. From that we infer, again writing $ I_a^{i-1}= \left [\frac{a-1}{2^{2^{i^{4}}}}, \frac{a}{2^{2^{i^{4}}}} \right)$ for $a \in S_{i-1} = \{1, \ldots, 2^{2^{i^4}} \}$, that $\lambda (A_i \cap I_a^{i-1})$ has the same value for all $a \in S_{i-1}$. Thus, for all $x \in [0,1]$, we obtain
\begin{align*}
 \mathbb{E} \left[ \mathbbm{1}_{A_i} | \mathcal{F}_{i-1} \right] (x)
 &= \sum_{a \in S_{i-1}} 2^{2^{i^4}} \lambda( A_i \cap I_a^{i-1} ) \mathbbm{1}_{I_a^{i-1}}(x) \\
 &= 2^{2^{i^4}} \lambda (A_i \cap I_1^{i-1}) \\
 & = \sum_{a \in S_{i-1}} \lambda (A_i \cap I_a^{i-1}) \\
 & = \lambda(A_i),
\end{align*}
which proves our claim. Hence, the sets $A_1, A_2, A_3, \dots$ are stochastically independent -- this was the purpose of the factor $2^{2^{i^4}}$ in the definition of $(n_k)_{k\geq 1}$ in \eqref{nk_def} for all $k$ from the same block $\Delta_i$, and for switching from $Y_i$ to the discretized approximations $Z_i$.\footnote{Our aim is to prove that for all $i$ the lacunary sum $\sum_{k \in \Delta_i} f(n_k x)$ is large for a sufficiently large set of values of $x$, which yields the desired LIL by an application of the divergence Borel--Cantelli lemma. Without the factor $2^{2^{i^4}}$ in the definition of $n_k$ for all $k$ from the same block $\Delta_i$ we would lack the necessary stochastic independence of these set, which is a necessary prerequisite for an application of the divergence Borel--Cantelli lemma. More specifically, in what follows we will decompose $\sum_{k \in \Delta_i} f(n_k x)$ into the product of a ``local variance function'', multiplied with a pure trigonometric sum (whose distribution is close to Gaussian), and we have to make sure that the local variance function is not large at the same locations of $x$, for different values of $i$.}\\

It remains to establish that
\begin{equation} \label{measure_diverges}
\sum_{i=1}^\infty \lambda(A_i) = +\infty.
\end{equation}
Then, by an application of the second Borel-Cantelli lemma (using the independence of the sets $A_1, A_2, \dots$), we can conclude that almost all $x\in[0,1]$ are contained in infinitely many sets $A_i$, which implies \eqref{to_show_2} and \eqref{to_show}.\\

As observed before, we have $\|Y_i - Z_i \|_\infty \leq 1$ for all sufficiently large $i \in\mathbb N$. Thus, by the triangle inequality
\begin{equation} \label{A_sup_set}
A_i \supseteq \left\{ x \in [0,1]: ~\left| Y_i \right| \geq \left(\frac{d \sqrt{\varepsilon}}{2}-2 \right) \sqrt{2 R^i \log \log R^i} - 2 d^2 i - 2 \right\}
\end{equation}
for all sufficiently large $i \in\mathbb N$. As noted, $Y_i$ is periodic with period $2^{2^{i^4}}$. We now define a new sequence $(\nu_k)_{k \geq 1}$ via
$$
\nu_k := \frac{n_k}{2^{2^{i^4}}}, \qquad k \in \Delta_i, \quad i \in\mathbb N,
$$
or, equivalently, via
$$
\nu_k = 2^k +m, \qquad k \in \Delta_i^{(m)}, \quad m \in \{1, \dots, M(i)\},~i\in\mathbb N .
$$
Then $Y_i$ has the same distribution as $\sum_{k \in \Delta_i} f(\nu_k x)$ so that
\begin{eqnarray}
& & \lambda \left( \left\{ x \in [0,1]: ~\left| Y_i \right| \geq \left(\frac{d \sqrt{\varepsilon}}{2}-2 \right)\sqrt{2 R^i \log \log R^i} - 2 d^2 i - 2 \right\} \right) \nonumber \\
& = & \lambda \left(\left\{ x \in [0,1]: ~\left| \sum_{k \in \Delta_i} f(\nu_k x) \right| \geq \left(\frac{d \sqrt{\varepsilon}}{2}-2 \right) \sqrt{2 R^i \log \log R^i} - 2 d^2 i - 2 \right\} \right).  \label{A_set_nu}
\end{eqnarray}
Now we will relate the particular choice of our function $f$ with our particular construction of the sequence $(n_k)_{k \geq 1}$, somewhat in the spirit of the Erd\H os--Fortet example. The key point of the Erd\H os--Fortet example is that in the sum $\sum \left( \cos(2 \pi (2^k-1) x) + \cos(4 \pi (2^k - 1) x) \right)$, the term $\cos(4 \pi (2^k - 1) x) = \cos(2 \pi (2^{k+1} - 2) x)$ (from frequency $4 \pi$ and index $k$) and the term $\cos (2 \pi (2^{k+1} - 1) x)$ (from frequency $2\pi$ and index $k+1$) can be combined, so that by the standard trigonometric identity $\cos ( \alpha) + \cos(  \beta) = 2 \cos \left( \frac{\alpha + \beta }{2} \right) \cos \left( \frac{\alpha - \beta }{2} \right) $, we have
\begin{eqnarray}
\sum_{k=1}^N \left( \cos(2 \pi (2^k-1) x) + \cos(4 \pi (2^k - 1) x) \right) 
& = &\sum_{k=1}^N  \left( \cos(2 \pi (2^k-1) x) + \cos(2 \pi (2^{k} - 2) x) \right) \nonumber  \\
&  & \qquad     + \cos(2 \pi (2^{N+1} -2) x) -1 \nonumber \\
& \approx & \sum_{k=1}^N \left( \cos(2 \pi (2^k-1) x) + \cos(2 \pi (2^{k} - 2) x) \right) \nonumber \\
& =  & 2 \cos(\pi x) \sum_{k=1}^N  \cos(2 \pi (2^k-3/2) x), \label{erd-fort-2}
\end{eqnarray}
i.e.,\ the generalized lacunary sum essentially decomposes into the product of the fixed function $2 \cos(\pi x)$ and a purely trigonometric lacunary sum (this explains why the factor $2 \cos (\pi x)$ appears on the right-hand side of  \eqref{erd-fort}). Our sum $\sum_{k \in \Delta_i} f(\nu_k x)$ will split in a somewhat similar way into a (slowly fluctuating) function $g_i(x)$, multiplied with a purely trigonometric lacunary sum (to which we can apply Gaposhkin's quantitative CLT of Lemma \ref{lemma_gaposhkin}). However, while in Erd\H os--Fortet's construction the contribution of only two subsequent summation indices can be combined (leading to a factor $2 \cos(\pi x)$ which is bounded by 2), in our construction the contribution of $d$ subsequent summation indices can be combined, leading to a function $g_i(x)$ which becomes as large as $d$ for some values of $x$. We will continue to comment on the heuristics after some further steps of calculations.\\

For $i \geq 1$ and $m \in \{1, \dots, M(i) \}$, we have
\begin{eqnarray}
\sum_{k \in \Delta_i^{(m)}} f(\nu_k x) & = & \sum_{k \in \Delta_i^{(m)}} \sum_{j=0}^{d-1} \cos (2 \pi 2^j (2^k + m) x) \nonumber\\
& = & \sum_{k \in \Delta_i^{(m)}} \sum_{j=0}^{d-1} \cos (2 \pi (2^{k+j} + 2^j m) x). \label{here_appear}
\end{eqnarray}
The sequence $(n_k)_{k \geq 1}$ (and so $(\nu_k)_{k \geq 1}$) and the trigonometric polynomial $f$ were constructed in such a way that in the representation \eqref{here_appear} there are many terms that can be combined. More precisely, when understanding $k+j= \ell$ as a new summation index in \eqref{here_appear}, then there are $d$ many different pairs $(k,j)$ for which $k+j$ adds up to $\ell$. Thus, the sum in \eqref{here_appear} can be rewritten as
\begin{equation} \label{rewritten_as}
\sum_{k \in \Delta_i^{(m)}} \sum_{j=0}^{d-1} \cos (2 \pi (2^{k+j} + 2^j m) x) = \sum_{\ell \in \Delta_i^{(m)}} \sum_{j=0}^{d-1} \cos (2 \pi (2^\ell + 2^j m) x) + E_{i,m}(x),
\end{equation}
where $E_{i,m}(x)$ is an error term consisting of sums of cosines which comes from a) the $d$ smallest indices $\ell \in \Delta_{i}^{(m)}$, for which there do not exist $d$ many pairs $(k,j)$ with $k \in \Delta_i^{(m)}$ and $j \in \{0,\dots,d-1\}$ such that $k+j=\ell$, and b) from the $d$ largest indices $k$ in $\Delta_i^{(m)}$ for which $k+j$ exceeds all elements $\ell \in \Delta_i^{(m)}$.\footnote{Let $\left \{ k_1, \ldots, k_{\# \Delta_i^{(m)}} \right \}$ denote the elements of $\Delta_i^{(m)}$ in increasing order. Then $k_1$ has exactly one representation of the form $\ell +j$ for $\ell \in \Delta_i^{(m)}$ and $j \in \{0, \ldots, d-1 \}$, namely $k_1 = k_1 +0$. $k_2$ has two such representations, namely $k_2 = k_2 + 0$ and $k_2 = k_1 +1$. So, the first $d-1$ elements of $\Delta_i^{(m)}$ have less than $d$ possible representations of the form $\ell +j$. This gives us precisely $\frac{d(d-1)}{2}$ many cosine functions which form one part of $E_{i,m}$. On the other hand, if $\ell $ is one of the $d-1$ largest elements in $\Delta_i^{(m)}$, then we have strictly less than $d$ choices for $j \in \{ 0, \ldots, d-1\}$ such that $\ell + j \in \Delta_i^{(m)}$. This leads to another $\frac{d(d-1)}{2}$ summands in $E_{i,m}$. In total $E_{i,m}$ consists of $d (d-1) \leq d^2$ many cosine functions.} Hence, $E_{i,m}$ is a sum of at most $d^2$ many cosine functions, and thus
\begin{equation*}
    \left | \left | E_{i,m} \right| \right |_{\infty} \leq d^2 
\end{equation*}
as well as (recall $ M(i) = \lceil  i^{1- \varepsilon} \rceil \leq i $)
\begin{equation} \label{eim_error}
\left\|\sum_{m=1}^{M(i)} E_{i,m} \right\|_\infty \leq  d^2 i.
\end{equation} \\

Let $w \in \mathbb{N}$. Then, applying the trigonometric identity $\cos(x+y) = \cos x \cos y - \sin x \sin y$, we have
\begin{eqnarray*}
\sum_{j=0}^{d-1} \cos (2 \pi (w + 2^j m) x) & = &  \cos (2 \pi w x) \sum_{j=0}^{d-1} \cos (2 \pi 2^j m x) - \sin (2 \pi w x) \sum_{j=0}^{d-1} \sin (2 \pi 2^j m x).
\end{eqnarray*}
Applying this to the sum in \eqref{rewritten_as}, together with \eqref{here_appear} and the trigonometric identity $\cos(2 \pi y) = 1 - 2 (\sin(\pi y))^2,~y \in \mathbb{R}$, after summing over $m = 1, \dots, M(i),$ we arrive at
\begin{eqnarray}
& & \sum_{k \in \Delta_i} f(\nu_k x)  \nonumber\\
& = & \sum_{m=1}^{M(i)} \left(\sum_{j=0}^{d-1} \cos (2 \pi 2^j m x)  \sum_{k \in \Delta_i^{(m)}} \cos (2 \pi 2^k x) \right. \nonumber\\
& & \quad \left. - \sum_{j=0}^{d-1} \sin (2 \pi 2^j m x) \sum_{k \in \Delta_i^{(m)}}  \sin (2 \pi 2^k x)  + E_{i,m}(x) \right) \nonumber\\
& = &  d \sum_{k \in \Delta_i} \cos (2 \pi 2^k x) \nonumber \\
& & \quad -  \sum_{m=1}^{M(i)} \sum_{j=0}^{d-1} (\sin (\pi 2^j m x))^2 \sum_{k \in \Delta_i^{(m)}} \cos (2 \pi 2^k x)  \label{many_terms_2}\\
& & \quad - \sum_{m=1}^{M(i)} \sum_{j=0}^{d-1} \sin (2 \pi 2^j m x)   \sum_{k \in \Delta_i^{(m)}}  \sin (2 \pi 2^k x)  \label{many_terms_3} \\
& &  \quad + \sum_{m=1}^{M(i)}  E_{i,m}(x). \label{many_terms_4}
\end{eqnarray}
We comment one last time on the heuristics behind our construction of the function $f$ and the sequence $(n_k)_{k \geq 1}$. They were carefully adapted to each other so that (after removing the extra periodicity by changing from $n_k$ to $\nu_k$), we can essentially write 
$$
\sum_{k \in \Delta_i} f(\nu_k x) = g_i(x) \sum_{k \in \Delta_i} \cos (2 \pi 2^k x) + (\text{errors}),
$$
where $g_i(x)$ is a function which becomes as large as $d$ when $x$ is small (we ignore here the fact that in the equations above, one lacunary sum is a sine sum, not a cosine sum). Heuristically (ignoring that $g_i(x) $ also depends on $m$), we essentially have $g_i(x) = d - g_{i}^{(1)} (x) - g_i^{(2)} (x)$, where $g_i^{(1)}$ and $g_i^{(2)}$ are the slowly fluctuating functions in the double sums in lines \eqref{many_terms_2} and \eqref{many_terms_3}, respectively. We can ensure that $g_i^{(1)}$ and $g_i^{(2)}$ are small when $x$ is smaller than $1/M(i) \approx i^{-1+\varepsilon}$ by some factor. The sum $R^{-i/2} \sum_{k \in \Delta_i} \cos (2 \pi 2^k x)$ is a classical normalized lacunary sum and behaves like a Gaussian $\mathcal{N} (0,1/2)$ random variable (see, e.g., \cite[Theorem 1]{kac}). Thus, for $x$ near $0$, the sum $R^{-i/2}\sum_{k \in \Delta_i} f(\nu_k x)$ essentially behaves like $R^{-i/2} d \sum_{k \in \Delta_i} \cos (2 \pi 2^k x)$ and thus like a $\mathcal{N} (0,d^2/2)$ random variable (locally for $x$ near 0 we have gained a factor $d$ for the variance in comparison with $\|f\|_2^2$, this is the key point!), and we can factorize
\begin{eqnarray*}
& & \text{Prob} \left(x\in [0,1] \,:\, \left| \sum_{k \in \Delta_i} f(\nu_k x) \right| \text{ is ``large''} \right) \\ & \geq & \textup{Prob} \big(x \text{ is ``close enough'' to 0 so that $g_i(x)\approx d$} \big)  \times  \textup{Prob} \big(\text{a $\mathcal{N} (0,d^2/2)$ r.v.\ is ``large''}\big).
\end{eqnarray*}
The size of the set of values of $x$ which are close enough to 0 is around $i^{-1+\varepsilon}$, see above, while the probability of a $\mathcal{N} (0,d^2/2)$ r.v.\ exceeding something around $\frac{d \sqrt{ \varepsilon}}{2} \sqrt{2 \log \log R^i}$ is roughly $e^{-\frac{\varepsilon}{2} \log \log R^i} \approx i^{-\varepsilon/2}$. Overall this gives a probability of $i^{-1+\varepsilon/2}$ that $\left| \sum_{k \in \Delta_i} f(\nu_k x) \right|$ is ``large'', which allows an application of the divergence Borel--Cantelli lemma.\\

Now we make this heuristic precise. Let $i\in\mathbb N$ and $h_i \in \mathbb N$ such that 
$$
\frac{1}{20 d 2^d M(i)} \leq 2^{-h_i} \leq \frac{1}{10 d 2^d M(i)},
$$
which implies that
$$
2^{-h_i} \geq \frac{1}{20 d 2^d \lceil i^{1-\varepsilon} \rceil} \geq i^{-1+5 \varepsilon/6}
$$
for sufficiently large $i$. If $i$ is sufficiently large, then for all $k \in \Delta_i$ we have $k \geq R^{i-1} \geq i \geq h_i$, so that by periodicity, for all $t > 0$, we have (recall that $\#\Delta_i = 
R^i$) 
\begin{align*}
&\lambda \left( \left\{ x \in [0,2^{-h_i}]\,:\,\sum_{k \in \Delta_i} \cos (2 \pi 2^k x) \geq t \right\} \right) \\
&=
2^{-h_i} \sum_{a=0}^{2^{h_i}-1} \lambda \left( \left\{ x \in \left[ \frac{a}{2^{h_i}}, \frac{a+1}{2^{h_i}} \right]\,:\,\sum_{k=1}^{R^i} \cos (2 \pi 2^k x) \geq t \right\} \right) \\
&=
2^{-h_i} \lambda \left( \left\{ x \in [0,1]\,:\,\sum_{k=1}^{R^i} \cos (2 \pi 2^k x) \geq t \right\} \right).
\end{align*}
Applying Gaposhkin's Berry--Esseen type estimate (see Lemma \ref{lemma_gaposhkin}) with all weights being equal, we obtain using $1 - \Phi(y) \geq 1 / (4 y e^{y^2/2})$ for $y \geq 4$ (see, e.g., \cite[Proposition 3]{szarekwerner1997}), that
\begin{eqnarray}
& & 2^{-h_i} \lambda \left( \left\{ x \in [0,1]\,:\, \sum_{k=1}^{R^i} \cos (2 \pi 2^k x) \geq \frac{\sqrt{\varepsilon}}{2} \sqrt{2 R^i \log \log R^i} \right\} \right) \nonumber\\
& \geq & 2^{-h_i}  \left( 1 - \Phi \left(\frac{\sqrt{\varepsilon}}{2} \sqrt{\log \log R^i} \right)  - \mathcal{O} (R^{-i/4}) \right) \nonumber\\
& \geq & \underbrace{2^{-h_i}}_{\geq i^{-1+5 \varepsilon/6} \text{ for sufficiently large $i$}} \underbrace{\left( \frac{e^{-\frac{\varepsilon}{4} \log \log R^i}}{4 \frac{\sqrt{\varepsilon}}{2} \sqrt{\log \log R^i}} - \mathcal{O} (R^{-i/4}) \right)}_{ \geq i^{-\varepsilon/3} \text{ for sufficiently large $i$}} \nonumber\\
& \geq & \frac{1}{i^{1-\varepsilon/2}}\label{lower_bound_p}
\end{eqnarray}
for sufficiently large $i \in \mathbb{N}$.\\

We need to show that the terms in \eqref{many_terms_2}, \eqref{many_terms_3} and \eqref{many_terms_4} do not make a relevant contribution when $x \in [0,2^{-h_i}]$.  Recall that by construction the smallest index $k$ in $\Delta_i$ is of size at least $R^{i-1}$, and that for sufficiently large $i\in\mathbb N$, we have $R^{i-1} \geq h_i$. We will work on short intervals of the form $\left[\frac{a}{2^{R^{i-1}}},\frac{a+1}{2^{R^{i-1}}} \right] \subset [0,2^{-h_i}]$ for some small integer $a$. Within such an interval, the function $\sum_{j=0}^{d-1} (\sin (\pi 2^j m x))^2$ is essentially constant. More precisely, writing
$$
s_{a,m,i} := \sum_{j=0}^{d-1} \left(\sin \left(\pi 2^j m \frac{a}{2^{R^{i-1}}}\right)\right)^2,
$$
by considering derivatives, we obtain the Lipschitz estimate
\begin{equation} \label{sami_discrete}
\left| \sum_{j=0}^{d-1} (\sin (\pi 2^j m x))^2 - s_{a,m,i} \right| \leq \frac{2^{d+1} \pi m}{2^{R^{i-1}}} \quad \text{for all $x \in \left[\frac{a}{2^{R^{i-1}}},\frac{a+1}{2^{R^{i-1}}} \right]$}.
\end{equation}

Furthermore, since we assumed that $\frac{a}{2^{R^{i-1}}} \in [0,2^{-h_i}]$, we have
\begin{eqnarray}
s_{a,m,i} & \leq & \sum_{j=0}^{d-1} \left(\sin \left(\pi 2^j m 2^{-h_i} \right)\right)^2 \nonumber\\
& \leq & \sum_{j=0}^{d-1} \left(\pi 2^j m 2^{-h_i} \right)^2 \nonumber\\
& \leq & \sum_{j=0}^{d-1} \left( \frac{\pi 2^j m}{10 d 2^d M(i)} \right)^2 \nonumber\\
& \leq & \frac{\pi^2}{100 d} \nonumber \\
& \leq &
\frac{1}{10 d}, \label{saim}
\end{eqnarray}
uniformly in $a$ and $m$. For $i \in\mathbb N$, we set
$$
S_{a,i} := \left(\sum_{m=1}^{M(i)}  \sum_{k \in \Delta_i^{(m)}} s_{a,m,i}^2 \right)^{1/2},
$$
and define
$$
\lambda_k := \frac{s_{a,m,i}}{S_{a,i}}, \qquad \text{for $k \in \Delta_i^{(m)}$.}
$$
Then, we clearly have
$$
\sum_{k \in \Delta_i} \lambda_k^2 = 1,
$$
and by \eqref{saim} it holds that
$$
S_{a,i} \leq \frac{R^{i/2}}{10 d}.
$$
Note that, using the estimates $S_{a,i} \geq \sqrt{\# \Delta_i^{(m)} s_{a,m,i}^2}$ and $\# \Delta_i^{(m)} \geq \frac{R^i}{\lceil i^{1- \varepsilon} \rceil } -1 $ for all $m \in \left \{ 1, \ldots, M(i)  \right \}$ by \eqref{mi_card}, we have
\begin{equation} \label{lambda_size}
\max_{k \in \Delta_i} \lambda_k = \max_{1 \leq m \leq M(i)} \frac{s_{a,m,i}}{S_{a,i}} \leq \max_{1 \leq m \leq M(i)} \frac{s_{a,m,i}}{\sqrt{\# \Delta_i^{(m)} s_{a,m,i}^2}} \leq \sqrt{\frac{\lceil i^{1-\varepsilon} \rceil }{R^{i} - \lceil i^{1- \varepsilon} \rceil}} \leq R^{-i/3}
\end{equation}
for sufficiently large $i \in\mathbb N$.\\

Thus, by periodicity, using Lemma \ref{lemma_gaposhkin} with the weights $\lambda_k$ as specified above, and using \eqref{lambda_size}, we have
\begin{eqnarray*}
& & 2^{R^{i-1}} \lambda \left( \left\{ x \in \left[\frac{a}{2^{R^{i-1}}},\frac{a+1}{2^{R^{i-1}}} \right] \,:\, \left| \sum_{m=1}^{M(i)} \sum_{k \in \Delta_i^{(m)}} s_{a,m,i} \cos (2 \pi 2^k x) \right| > \sqrt{2 R^i \log \log R^i} \right\} \right) \\
& = & 2^{R^{i-1}} \lambda \left( \left\{ x \in \left[\frac{a}{2^{R^{i-1}}},\frac{a+1}{2^{R^{i-1}}} \right] \,:\, \left| \sum_{k \in \Delta_i} \lambda_k \cos (2 \pi 2^k x) \right| > S_{a,i}^{-1}  \sqrt{2 R^i \log \log R^i} \right\} \right) \\
& \leq & 2^{R^{i-1}} \lambda \left( \left\{ x \in \left[\frac{a}{2^{R^{i-1}}},\frac{a+1}{2^{R^{i-1}}} \right] \,:\, \left| \sqrt{2} \sum_{k \in \Delta_i} \lambda_k \cos (2 \pi 2^k x) \right| >   \sqrt{400 d^2 \log \log R^i} \right\} \right) \\ 
& \leq & 1 - \Phi(\sqrt{400 d^2 \log \log R^i}) + c R^{-i/12} \\
& \leq & i^{-2}
\end{eqnarray*}
uniformly in $a$ for sufficiently large $i \in\mathbb N$ (the last estimate is very coarse, but the point is that our estimate leads to a convergent series), where $c > 0$ is an absolute constant. Note that \eqref{sami_discrete} implies
$$
\left| \sum_{m=1}^{M(i)} \sum_{j=0}^{d-1} (\sin (\pi 2^j m x))^2 \sum_{k \in \Delta_i^{(m)}} \cos (2 \pi 2^k x) - \sum_{m=1}^{M(i)} s_{a,m,i} \sum_{k \in \Delta_i^{(m)}} \cos (2 \pi 2^k x) \right| \leq R^i \frac{2^{d+1} \pi m}{2^{R^{i-1}}} \leq 1
$$
for sufficiently large $i \in\mathbb N$. After summing over all $a$ such that $\left[\frac{a}{2^{R^{i-1}}},\frac{a+1}{2^{R^{i-1}}} \right] \subset [0,2^{-h_i}]$, using the previous estimate and the triangle inequality, we finally arrive at
\begin{eqnarray*} 
& & \lambda \left( \left\{ x \in \left[0, 2^{-h_i} \right] :~ \left| \sum_{m=1}^{M(i)} \sum_{j=0}^{d-1} (\sin (\pi 2^j m x))^2 \sum_{k \in \Delta_i^{(m)}} \cos (2 \pi 2^k x)  \right| > \sqrt{2 R^i \log \log R^i}  + 1  \right\} \right) \nonumber\\
& & \leq 2^{-h_i} i^{-2} \leq i^{-2}
\end{eqnarray*}
for sufficiently large $i \in\mathbb N$, as a bound for the contribution of the term in line \eqref{many_terms_2}.\\

An analogous argument for the contribution of the term in line \eqref{many_terms_3} yields 
\begin{equation*}
\lambda \left( \left\{ x \in \big[0, 2^{-h_i} \big] \,:\, \left|  \sum_{m=1}^{M(i)} \sum_{j=0}^{d-1} \sin (2 \pi 2^j m x)   \sum_{k \in \Delta_i^{(m)}}  \sin (2 \pi 2^k x) \right| > \sqrt{2 R^i \log \log R^i}  + 1 \right\} \right) \leq \frac{1}{i^2},
\end{equation*}
where the relevant point for the argument is that the function $\sum_{j=0}^{d-1} \sin (2 \pi 2^j m x)$ is also very small in the interval $[0,2^{-h_i}]$ (and where we use a variant of Gaposhkin's Lemma \ref{lemma_gaposhkin} for sine instead of cosine).\\

Combining these two estimates with \eqref{rewritten_as}, \eqref{eim_error} and \eqref{lower_bound_p}, we obtain
\begin{eqnarray*}
\lambda \left( \left\{ x \in \left[0, 1 \right] \,:\, \left|  \sum_{k \in \Delta_i} f(\nu_k x) \right| > \left( \frac{d \sqrt{\varepsilon}}{2} - 2 \right)\sqrt{2 R^i \log \log R^i}  - 2 d^2 i - 2  \right\} \right) & \geq & \frac{1}{i^{1-\varepsilon/2}} - \frac{2}{i^2}
\end{eqnarray*}
for sufficiently large $i \in\mathbb N$. By \eqref{A_sup_set} and \eqref{A_set_nu} this implies
$$
\lambda (A_i) \geq i^{-1+\varepsilon/2} - 2 i^{-2}
$$
for all sufficiently large $i \in\mathbb N$. Thus, we have established \eqref{measure_diverges}, which completes the proof of Theorem \ref{th1}.

\section*{Acknowledgments}

CA was supported by the Austrian Science Fund (FWF), projects I-4945, I-5554, P-34763, and P-35322. LF was supported by the Austrian Science Fund (FWF), projects P-32405 and P-35322. JP was supported by the German Research Foundation (DFG) under project 516672205 and by the Austrian Science Fund (FWF) under project P-32405. 

\bibliography{LIL_Dio}
\bibliographystyle{abbrv}

\end{document}